\documentclass[a4,12pt]{article}
\usepackage{amsmath, amssymb, amsfonts, amstext, amsthm, textcomp}
\usepackage[mathscr]{euscript}
\newtheorem{thm}{Theorem}[section]

\newtheorem{defn}{Definition}[section]
\newtheorem{rem}{Remark}[section]
\newtheorem{ex}{Example}[section]
\topmargin -10 mm \textwidth 160 true mm \textheight 240 mm
\oddsidemargin   -0.4cm

\thispagestyle{empty}
\begin{document}

\noindent
\begin{center}
 {\bf \large Vanishing Pseudo Schur Complements, Reverse Order Laws, Absorption Laws and Inheritance Properties}
\vspace{.5cm}

             { \bf Kavita Bisht}\\
Department of Mathematics\\
                         Indian Institute of Technology Madras\\

                               Chennai 600 036, India \\
          
                and\\
              {\bf K.C. Sivakumar} \\
                         Department of Mathematics\\
                         Indian Institute of Technology Madras\\

                               Chennai 600 036, India. \\
\end{center}
\begin{center}
{\bf Abstract}
\end{center}
The problem of when the vanishing of a (generalized) Schur complement of a block matrix (corresponding to the leading principal subblock) implies that the other (generalized) Schur complement (corresponding to the trailing principal subblock) is zero, is revisited. A simple proof is presented. Absorption laws for two important classes of generalized inverses are considered next. Inheritance properties of the generalized Schur compements in relation to the absorption laws are derived. Inheritance by the generalized principal pivot transform is also studied. 

{\bf Keywords:} Moore-Penrose inverse, Group inverse, Pseudo principal pivot transform, Pseudo Schur complements, Absorption law, Inheritance properties. 

{\bf AMS Subject Classification:} 15A09, 15A24

\newpage
\section{Introduction}
Let $M$ be a block matrix with real entries, partitioned as 
\begin{center}
 $\left( \begin{array}{lr} A  & B \\ C &  D  \end{array}
\right)$
\end{center}
where $A$ is nonsingular. Then the Schur complement of $A$ in $M$ denoted by $M/A$ is given by $D-CA^{-1}B$. This notion has proved to be a fundamental object in many applications like numerical analysis, statistics and operator inequalities, to name a few. The notion of Schur complement was extended to the case, where $A^{-1}$ was replaced by the Moore-Penrose inverse of $A$, by Albert \cite{albert1969conditions} who studied the positive definiteness and nonnegative definiteness for symmetric matrices using this formula. We also consider what was called as the complementary pseudo Schur complement in \cite{kavitaPseudoschur2015}, which extends the Schur complement of $D$ in $M$, denoted by $M/D$ and is given by $D-CA^{-1}B$. These two pseudo Schur complements of $M$ were called the associated pseudo Schur complements of $M$ \cite{wang2011associated}. In \cite{damm2009cancellation}, a question was asked as to when $M/D=0$ implies $M/A=0$. Necessary and sufficient conditions for this implication to hold were 
given in \cite{wang2011associated}. In this article, we revisit this question 
and 
give a much simpler proof of this characterization. Our proof has the advantage that it is valid for operators over Hilbert spaces. 

Let us turn to the next topic that is considered here. The absorption law in a ring $R$ says that if $a,b$ in a ring $\cal R$ with unity are invertible, then one has the trivial formula $a^{-1} + b^{-1}=a^{-1}(a+b) b^{-1}$. The absorption laws for generalized inverses of matrices were first studied in \cite{chen2008absorption} and \cite{lin2011mixedabsorption}, among others. They obtained rank conditions for the absorption law to hold for certain classes of generalized inverses. We also refer the reader to the work reported in \cite{liu2012absorption} where equivalent conditions for the absorption laws to hold, for the various generalized inverses of operators on Hilbert spaces. As the second problem, we study inheritance properties of the pseudo Schur complement in relation to the absorption laws expressed in terms of generalized inverses. 

As a third goal, we study inheritance of the absorption laws by a particular transformation of a block matrix. Let us recall this, next. Again, consider $M$, partitioned as above. If $A$ is nonsingular, then the principal pivot transform (PPT) of $M$ is the block matrix defined by 
\begin{center} 
$\begin{pmatrix}
A^{-1} & -A^{-1}B\\
CA^{-1} & D-CA^{-1}B
\end{pmatrix}$.
\end{center}
This operation of obtaining the PPT arises in many contexts, namely mathematical programming, numerical analysis and statistics, to name a few. Just as in the case of the pseudo Schur complement, it is natural to study the PPT when the usual inverses are replaced by generalized inverses. Meenakshi \cite{meenakshi1986principal}, was perhaps the first to study such a generalization for the Moore-Penrose inverse. We shall refer to this generalization as the pseudo principal pivot transform. We present results which show when the pseudo principal pivot transform inherits the absorption law property. 

We summarize the contents of the paper as follows. In the next section, we provide a brief background for the rest of the material in the article. In the third section, we study when the annihilation of one pseudo Schur compement implies that the other pseudo Schur compement is zero. In Section $4$, we consider the inheritance of the absorption law property by the two pseudo Schur complements. In the concluding section, we study inheritance by the pseudo principal pivot transform. In all the results, we first consider the case of the Moore-Penrose inverse followed by the group inverse.

\section{Preliminaries}
For $M\in \mathbb{R}^{m\times n}$, we denote the null space, the range space and the transpose of $M$ by $N(M)$, $R(M)$ and $M^T$, respectively. The {\it Moore-Penrose (generalized) inverse} of $M \in \mathbb{R}^{m\times n}$, denoted by $M^\dagger$ is the unique solution $X \in \mathbb{R}^{n\times m}$ of the equations: $M=MXM$, $X=XMX$, $(MX)^T=MX$ and $(XM)^T=XM$. Any $X$ satisfying the first two equations will be called a {\it $\{1,2\}$-inverse} of $M$. In such a case, we denote that by $X \in M\{1,2\}$. Let $M \in \mathbb{R}^{n \times n}$. If there exists $X \in \mathbb{R}^{n \times n}$ satisfying the three equations: $MXM=M$, $XMX=X$ and $MX=XM$, then such an $X$ can be shown to be unique. This unique $X$ denoted by $M^{\#}$ is called the {\it group (generalized) inverse} of $M$. Of course, if the matrix $M$ is invertible, then $M^{\dagger}=M^{\#}=M^{-1}$. It is well known that, unlike the Moore-Penrose inverse which exists for all matrices, the group inverse does not exist for all matrices. A necessary 
and sufficient condition for $M^{\#}$ to exist is the rank condition $rank(M^2)=rank(M)$. This is equivalent to the condition: $N(M^2)=N(M)$. Another equivalent statement is that the subspaces $R(M)$ and $N(M)$ are complementary. This means, in particular that if $M$ is symmetric then $M^{\#}$ exists. Moreover, for a symmetric matrix, it follows that the Moore-Penrose inverse and the group inverse coincide. Recall that $M\in \mathbb{R}^{n \times n}$ is called range-symmetric if $R(M^T)=R(M)$. The following properties of $M^{\dagger}$ are well known: $(M^{\dagger})^{T}=(M^T)^{\dagger}$; $R(M^T)=R(M^{\dagger})$; $N(M^T)=N(M^{\dagger})$.  If $x\in R(M^T)$ then $x=M^\dagger Mx$. Similar properties of $M^{\#}$ which will be used frequently in this article are: $(M^{\#})^T=(M^T)^{\#}$; $R(M^{\#})=R(M)$; $N(M^{\#})=N(M)$.  If $x \in R(M)$ then $x=M^{\#}Mx$. For $A$ and $B \in \mathbb{R}^{m\times m}$, $R(A)\subseteq R(B)$ if and only if $BB^{\dagger}A=A$ and $R(A^*)\subseteq R(B^*)$ if and only if $AB^{\dagger}B=A$. 
For matrices $A$ and $B$ of the same size with $B$ being group invertible one has $R(A)\subseteq R(B)$ if and only if $BB^{\#}A=A$ and $R(A^*)\subseteq R(B^*)$ if and only if $AB^{\#}B=A$. We refer the reader to \cite{ben2003generalized} for proofs of these statements and other details. 

As mentioned earlier, given a block matrix $M=\begin{pmatrix}
A& B\\
C & D
\end{pmatrix}$, the pseudo Schur complement of $A$ in $M$ is defined by $F=D-CA^\dagger B$ and the complementary pseudo Schur complement of $D$ in $M$ is denoted by $G=A-BD^\dagger C$. Also the pseudo principal pivot transform of $M$ relative to $A$ is defined by
\begin{center}
$H:=pppt(M,A)_{\dagger}=
\begin{pmatrix}
  A^\dagger & -A^\dagger B \\
  CA^\dagger  & F
 \end{pmatrix}$,
\end{center} where $F=D-CA^\dagger B$ \cite{meenakshi1986principal}. The complementary pseudo principal pivot transform of $M$ relative to $D$ is defined by
\begin{center}
$J:=cpppt(M,D)_{\dagger}=
\begin{pmatrix}
 G & BD^\dagger \\
-D^\dagger C & D^\dagger
\end{pmatrix}$, 
\end{center} 
where $G=A-BD^\dagger C$ \cite{kavitaPseudoschur2015}.

Next, we recall the definition of the pseudo principal pivot transform of a block matrix in terms of the group inverse. Again, for the sake of convenience, we use the same nomenclature for the group inverse as the Moore-Penrose inverse. We refer to \cite{kavi1} for some of their properties.

Let $M=\begin{pmatrix}
A & B\\
C & D
\end{pmatrix}$ such that $A$, $B$, $C$ and $D \in \mathbb{C}^{n\times n}$. Suppose $A^\#$ exists. The pseudo principal pivot transform of $M$ relative to $A$ is given by 
\begin{center}
$S=pppt(M,A)_{\#}=
\begin{pmatrix}
  A^\# & -A^\# B \\
  CA^\#  & K
 \end{pmatrix}$.
\end{center}
Next, let $D^\#$ exist. Then the complementary pseudo principal pivot transform of $M$ relative to $D$ is defined by
\begin{center}
$T=cpppt(M,D)_{\#}=
\begin{pmatrix}
 L & BD^\# \\
-D^\# C & D^\#
\end{pmatrix}$. 
\end{center} 

\section{Vanishing Pseudo Schur Complements}
Let $A\in \mathbb{R}^{m\times n}$, $B\in \mathbb{R}^{m\times p}$, $C\in \mathbb{R}^{s\times n}$, $D\in \mathbb{R}^{s\times p}$ and $M=\begin{pmatrix}
A & B\\
C & D
\end{pmatrix}$. As before, let $F=D-CA^\dagger B$ be the pseudo Schur complement (of $A$ in $M$) and $G=A-BD^\dagger C$ be the complementary pseudo Schur complement (of $D$ in $M$). The implication $G=0\implies F=0$ has applications in studying the cancellation property of a product of three matrices and has been explored in \cite{damm2009cancellation}, where the question of characterizing such an implication was left as an open problem. A characterization was proved recently, using the singular value decomposition \cite{wang2011associated}. In this section, we present a much simpler and conceptual linear algebraic proof which, in particular, does not use the singular value decomposition. One clear advantage of this approach is that such a proof technique extends immediately to operators on Hilbert spaces. However, this will not be our concern in the present work. 

The reverse order law for the product of two matrices $A$ and $B$ is $(AB)^\dagger=B^\dagger A^\dagger$. It is well known that this law is not true, in general. The problem of when it holds was first studied by Greville \cite{greville1966note} who showed that a necessary and sufficient for the law to hold is that $BB^{*} A^\dagger A$ and $A^{*} ABB^\dagger$ are Hermitian. This was simplified by Argirhiade who showed that the reverse law order holds if and only if $R(A^*ABB^*)=R(BB^*A^*A)$ (see the references in \cite{hartwig1986reverse}). Hartwig studied this law for the product of three matrices and derived many characterizations for the formula $(ABC)^\dagger=C^\dagger B^\dagger A^\dagger$ to hold \cite{hartwig1986reverse}. 

We start with a result that is used to prove the main theorem. It shows that when the pseudo Schur complements are both zero, the submatrices constituting the block matrix $M$ satisfy certain reverse order laws for products of three matrices. 

\begin{thm}\label{revlaw1}
Let $M$ be defined as above. Suppose that $F=0$ and $G=0$. Then the following reverse order laws hold: \\
$(i)~ (BD^\dagger C)^\dagger=C^\dagger D B^\dagger$.\\
$(ii)~ (CA^\dagger B)^\dagger=B^\dagger AC^\dagger$.
\end{thm}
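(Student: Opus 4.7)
The plan is to leverage both hypotheses $F = 0$ and $G = 0$ simultaneously via a double substitution, rather than try to verify the four Moore--Penrose axioms for the three-factor product by brute force (which would require a nontrivial reverse-order-law argument).

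First I would rewrite the hypotheses as $A = BD^\dagger C$ and $D = CA^\dagger B$. The first factorisation immediately yields the range inclusions $R(A) \subseteq R(B)$ and $R(A^T) \subseteq R(C^T)$; symmetrically, the second yields $R(D) \subseteq R(C)$ and $R(D^T) \subseteq R(B^T)$. By the criteria recalled in Section 2, these four inclusions translate into the projector identities
\[
BB^\dagger A = A,\qquad AC^\dagger C = A,\qquad CC^\dagger D = D,\qquad DB^\dagger B = D.
\]

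For (i), since $A = BD^\dagger C$, the claim reduces to $A^\dagger = C^\dagger D B^\dagger$. Substituting $D = CA^\dagger B$ into the right-hand side collapses it to
\[
C^\dagger D B^\dagger \;=\; C^\dagger(CA^\dagger B)B^\dagger \;=\; (C^\dagger C)\,A^\dagger\,(BB^\dagger),
\]
so the claim comes down to the two identities $C^\dagger C A^\dagger = A^\dagger$ and $A^\dagger B B^\dagger = A^\dagger$. The first says the columns of $A^\dagger$ lie in $R(C^T)$, and indeed they span $R(A^\dagger) = R(A^T) \subseteq R(C^T)$; the second says the columns of $(A^\dagger)^T = (A^T)^\dagger$ lie in $R(B)$, which they do since they span $R(A) \subseteq R(B)$. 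Part (ii) is then obtained by the entirely symmetric argument, swapping the roles $A \leftrightarrow D$ and $B \leftrightarrow C$; the reduction is to $B^\dagger B D^\dagger = D^\dagger$ and $D^\dagger CC^\dagger = D^\dagger$, both of which fall out of the inclusions for $D$ derived in the previous paragraph.

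The only delicate point, as I see it, is bookkeeping: keeping straight which of $R(A)$ and $R(A^T)$ controls the columns, and which controls the rows, of $A^\dagger$ (and the symmetric statement for $D^\dagger$). The virtue of this approach is that each generalized inverse appearing in the claimed formula is absorbed into an orthogonal projection pinned down by a range inclusion, so the usually thorny symmetry conditions $(AY)^T = AY$ and $(YA)^T = YA$ never need to be verified directly --- they come along for free once the double substitution is made.
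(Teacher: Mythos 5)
Your proof is correct. The setup coincides with the paper's: both arguments start from $A=BD^\dagger C$ and $D=CA^\dagger B$, extract the four range inclusions, and use $C^\dagger CA^\dagger=A^\dagger$ (from $R(A^\dagger)=R(A^T)\subseteq R(C^T)$) to collapse $C^\dagger DB^\dagger$ to $A^\dagger BB^\dagger$. Where you genuinely diverge is the concluding step. The paper sets $Y=C^\dagger DB^\dagger=A^\dagger BB^\dagger$ and then verifies the Penrose equations $AYA=A$, $YAY=Y$, $(AY)^T=AY$, $(YA)^T=YA$ one by one, which is the bulk of its proof; you instead finish with a single further absorption, $A^\dagger BB^\dagger=A^\dagger$, justified by the fact that $BB^\dagger$ is the orthogonal projection onto $R(B)$ and $R\bigl((A^\dagger)^T\bigr)=R\bigl((A^T)^\dagger\bigr)=R(A)\subseteq R(B)$. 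That is exactly the fact the paper is implicitly re-deriving when it computes $AA^\dagger BB^\dagger=(BB^\dagger AA^\dagger)^T=AA^\dagger$, so your route is a real shortening: as you note, the symmetry conditions never need to be checked because each generalized inverse in $C^\dagger DB^\dagger$ is absorbed into a projection pinned down by a range inclusion. Your row/column bookkeeping is right in both places ($C^\dagger C$ acts on $R(A^\dagger)=R(A^T)$ from the left, $BB^\dagger$ acts on $R((A^\dagger)^T)=R(A)$ from the right), and the symmetric reduction of (ii) to $B^\dagger BD^\dagger=D^\dagger$ and $D^\dagger CC^\dagger=D^\dagger$ is likewise correct.
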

\begin{proof}
$(i)$: First, since $G=0$, we have $A=BD^\dagger C$. We then have $R(A)\subseteq R(B)$ and $N(C)\subseteq N(A)$. Thus $BB^\dagger A=A$. The inclusion, $N(C)\subseteq N(A)$ is equivalent to $R(A^*)\subseteq R(C^*)$ and so, $AC^\dagger C=A$. Similarly, since $D=CA^\dagger B$, we have $R(D)\subseteq R(C)$ and $N(B)\subseteq N(D)$. Thus $CC^\dagger D=D$ and $DB^\dagger B=D$. Now, $C^\dagger D=C^\dagger CA^\dagger B$. We have 
\begin{center}
$R(A^{\dagger}) =R(A^*) \subseteq R(C^*)=R(C^{\dagger})=R(C^{\dagger}C)$.
\end{center}
So, $C^\dagger CA^\dagger =A^\dagger$ and thus $C^\dagger D = A^\dagger B$. Let $Y=C^\dagger DB^\dagger$. Then 
\begin{eqnarray*}
AY&=&AC^\dagger DB^\dagger\\
&=&AA^\dagger BB^\dagger\\
&=&(AA^\dagger)^*(BB^\dagger)^*\\
&=&(BB^\dagger AA^\dagger)^*\\
&=&(AA^\dagger)^*\\
&=&AA^\dagger.
\end{eqnarray*}
So, $AYA=A$ and $(AY)^*=AY$. Also, 
\begin{eqnarray*}
YA&=&C^\dagger DB^\dagger A\\
&=&A^\dagger BB^\dagger A\\
&=&A^\dagger A,
\end{eqnarray*}
where we have used the fact that $BB^\dagger A=A$. Thus $(YA)^*=YA$. Further, 
\begin{eqnarray*}
YAY&=&A^\dagger AC^\dagger DB^\dagger\\
&=&A^\dagger AA^\dagger BB^\dagger\\
&=&A^\dagger BB^\dagger\\
&=&C^\dagger DB^\dagger\\
&=&Y.
\end{eqnarray*}
Thus $C^\dagger DB^\dagger=Y=A^\dagger=(BD^\dagger C)^\dagger$.\\
$(ii)$: Similar to the proof of part $(i)$.
\end{proof}

Next, we prove a characterization for the implication $G=0\implies F=0$ to hold. As mentioned earlier, this result was proved in \cite{wang2011associated} (Theorem 1). We present a simpler proof.

\begin{thm}\label{gimpliesf}
Let $M$ be defined as earlier. Suppose that $G=0$. Then $F=0$ if and only if the matrices $B,C$ and $D$ satisfy $(I-CC^\dagger)D=0$, $D(I-B^\dagger B)=0$ and $(BD^\dagger C)^\dagger =C^\dagger DB^\dagger$.
\end{thm}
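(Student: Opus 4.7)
My plan is to handle the two directions separately, both of which should follow rather directly from Theorem \ref{revlaw1} and straightforward substitution.

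For the necessity direction, assume $G=0$ and $F=0$. In the proof of Theorem \ref{revlaw1}, part (i), it was already established that under these hypotheses one has $CC^{\dagger}D = D$ and $DB^{\dagger}B = D$, which are exactly the first two conditions rewritten. The third condition, $(BD^{\dagger}C)^{\dagger} = C^{\dagger}DB^{\dagger}$, is literally the conclusion of Theorem \ref{revlaw1}(i). So the forward implication requires nothing more than quoting Theorem \ref{revlaw1}.

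For the sufficiency direction, assume $G = 0$ together with the three stated conditions. Since $G=0$, we have $A = BD^{\dagger}C$, and the hypothesized reverse order law then gives
\begin{equation*}
A^{\dagger} = (BD^{\dagger}C)^{\dagger} = C^{\dagger}DB^{\dagger}.
\end{equation*}
Now I would compute the pseudo Schur complement directly:
\begin{equation*}
CA^{\dagger}B = C(C^{\dagger}DB^{\dagger})B = (CC^{\dagger})D(B^{\dagger}B).
\end{equation*}
The remaining two hypotheses $(I-CC^{\dagger})D = 0$ and $D(I-B^{\dagger}B) = 0$ give $CC^{\dagger}D = D$ and $DB^{\dagger}B = D$, so that $CA^{\dagger}B = D$, i.e., $F = 0$.

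I do not anticipate a real obstacle here; the only care needed is to recognise that the hypothesis $(BD^{\dagger}C)^{\dagger} = C^{\dagger}DB^{\dagger}$ combined with $G=0$ gives an explicit formula for $A^{\dagger}$, after which the two projection-type identities collapse $CA^{\dagger}B$ to $D$. The content of the theorem is that these three conditions together are precisely what is missing from $G=0$ alone in order to force $F=0$, and Theorem \ref{revlaw1} supplies the necessity half for free.
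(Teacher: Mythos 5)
Your proposal is correct and follows essentially the same route as the paper: necessity comes from the trivial computations $CC^\dagger D = CC^\dagger CA^\dagger B = D$, $DB^\dagger B = D$ plus an appeal to Theorem \ref{revlaw1}(i), and sufficiency uses $G=0$ to identify $A^\dagger = (BD^\dagger C)^\dagger = C^\dagger DB^\dagger$ and then collapses $CA^\dagger B = CC^\dagger DB^\dagger B = D$. The only cosmetic difference is that the paper re-derives the two projection identities directly rather than citing them from inside the proof of Theorem \ref{revlaw1}.
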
 
\begin{proof}
Necessity: Suppose that $F=0$ so that $D=CA^\dagger B$. Then 
\begin{center}
$CC^\dagger D= CC^\dagger CA^\dagger B=CA^\dagger B =D$
\end{center}
and so $(I-CC^\dagger)D=0$. Again, one has 
\begin{center}
$DB^\dagger B=CA^\dagger B B^\dagger B=CA^\dagger B=D$
\end{center}
and so $D(I-B^\dagger B) = 0$. Finally, since $G=0$, by Theorem \ref{revlaw1}, it follows that $(BD^\dagger C)^\dagger=C^\dagger DB^\dagger$. 

Sufficiency: We have $CC^\dagger D=D=DB^\dagger B$ and $C^\dagger DB^\dagger=(BD^\dagger C)^\dagger$. Since $G=0$ we have $C^\dagger DB^\dagger=A^\dagger$. Hence, 
\begin{eqnarray*}
F&=&D-CA^\dagger B\\
&=&CC^\dagger D-CA^\dagger B\\
&=&CC^\dagger DB^\dagger B-CA^\dagger B\\
&=&CA^\dagger B-CA^\dagger B\\
&=&0.
\end{eqnarray*}
\end{proof}
The reverse implication $F=0\implies G=0$ is considered next. The proof is similar to the proof of Theorem \ref{gimpliesf} and is omitted. 

\begin{thm}\label{fimpliesg}
Let $M$ be defined as earlier. Suppose that $F=0$. Then $G=0$ if and only if $(I-BB^\dagger)A=0$, $A(I-C^\dagger C)=0$ and $(CA^\dagger B)^\dagger =B^\dagger AC^\dagger$.
\end{thm}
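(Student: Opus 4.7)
The plan is to mirror the proof of Theorem \ref{gimpliesf} with the roles of $A$ and $D$ interchanged (and correspondingly the roles of the ``off-diagonal'' blocks $B$ and $C$ swapped as dictated by the block structure). Since $F=0$ here plays the role that $G=0$ played before, and since part $(ii)$ of Theorem \ref{revlaw1} is the exact counterpart of part $(i)$ used previously, the argument should go through line-for-line after the appropriate substitution. The hypothesis $F=0$ means $D=CA^\dagger B$ throughout.

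For necessity, I would assume $G=0$, i.e., $A=BD^\dagger C$. From this identity I would immediately read off the range inclusions $R(A)\subseteq R(B)$ and $R(A^*)\subseteq R(C^*)$, which by the standard Moore--Penrose characterizations recalled in Section 2 give $BB^\dagger A=A$ and $AC^\dagger C=A$; these are the first two conclusions. The third, $(CA^\dagger B)^\dagger=B^\dagger A C^\dagger$, is exactly part $(ii)$ of Theorem \ref{revlaw1}, which applies because both $F$ and $G$ vanish.

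For sufficiency, I would mimic the short closing computation of Theorem \ref{gimpliesf}. Combining the hypothesis $D=CA^\dagger B$ (from $F=0$) with $(CA^\dagger B)^\dagger = B^\dagger A C^\dagger$ gives $D^\dagger = B^\dagger A C^\dagger$. Then using $BB^\dagger A=A$ and $AC^\dagger C=A$ in sequence,
\begin{eqnarray*}
G &=& A - BD^\dagger C \\
&=& BB^\dagger A C^\dagger C - B(B^\dagger A C^\dagger) C \\
&=& BB^\dagger A C^\dagger C - BB^\dagger A C^\dagger C \\
&=& 0.
\end{eqnarray*}

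The main ``obstacle'' is essentially bookkeeping: one must be careful that the two asymmetric range conditions $BB^\dagger A = A$ and $AC^\dagger C = A$ are indeed the ones produced by $A=BD^\dagger C$ (and not their transposes), so that they combine correctly with $D^\dagger = B^\dagger A C^\dagger$ to sandwich $A$ between $BB^\dagger$ on the left and $C^\dagger C$ on the right. Once the substitution scheme $A\leftrightarrow D$, $B\leftrightarrow C^{T}$-style is tracked properly, no new ideas beyond those in Theorems \ref{revlaw1} and \ref{gimpliesf} are required.
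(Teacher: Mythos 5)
Your proposal is correct and is essentially the proof the paper intends: the paper omits the argument, stating only that it is similar to that of Theorem \ref{gimpliesf}, and your line-for-line transposition (necessity via $A=BD^\dagger C$ giving $BB^\dagger A=A$ and $AC^\dagger C=A$ plus part $(ii)$ of Theorem \ref{revlaw1}; sufficiency via $D^\dagger=B^\dagger AC^\dagger$ and sandwiching $A=BB^\dagger AC^\dagger C$) is exactly that mirrored argument. No gaps.
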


Next, we consider an analogue of Theorem \ref{revlaw1} for the group inverse. Let $M=\begin{pmatrix}
A & B\\
C & D
\end{pmatrix}$ such that $A$, $B$, $C$ and $D$ are ${n\times n}$ group invertible matrices. Set $K=D-CA^{\#}B$ and $L=A-BD^{\#}C$.

\begin{thm}\label{revlaw1g}
Let $A$, $B$, $C$, $D$ and $M$ be as above. Suppose that $K=0$ and $L=0$. Then the following reverse order laws hold:\\
$(i)~ C^\# D B^\# \in (BD^\# C)\{1,2\}$.\\
$(ii)~ B^\# AC^\# \in (CA^\# B)\{1,2\}$.
\end{thm}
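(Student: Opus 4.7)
The plan is to mirror the computation in the proof of Theorem \ref{revlaw1}, exploiting the fact that the target here is only membership in $\{1,2\}$, so we need only verify $AYA = A$ and $YAY = Y$ for $Y = C^{\#}DB^{\#}$ with $A = BD^{\#}C$; we do not need the symmetry conditions $(AY)^{T} = AY$ and $(YA)^{T} = YA$ that were essential in the Moore-Penrose case. This weakening is forced on us, since $AA^{\#}$ and $A^{\#}A$ are not in general self-adjoint, so one cannot expect $Y$ to be the Moore-Penrose inverse of $BD^{\#}C$.

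The first step is to translate the hypotheses $K = 0$ and $L = 0$ into usable identities. From $A = BD^{\#}C$ one reads off $R(A) \subseteq R(B)$ and $N(C) \subseteq N(A)$, and from $D = CA^{\#}B$ one reads off $R(D) \subseteq R(C)$ and $N(B) \subseteq N(D)$. Applying the group-inverse range-inclusion characterizations recalled in the preliminaries (namely $R(X) \subseteq R(Y)$ iff $YY^{\#}X = X$ and $R(X^{*}) \subseteq R(Y^{*})$ iff $XY^{\#}Y = X$) yields the four workhorse identities $BB^{\#}A = A$, $AC^{\#}C = A$, $CC^{\#}D = D$ and $DB^{\#}B = D$.

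For part (i), set $Y = C^{\#}DB^{\#}$. Substituting $D = CA^{\#}B$ in $AY$ and applying $AC^{\#}C = A$ gives $AY = AA^{\#}BB^{\#}$; right-multiplying by $A$ and using $BB^{\#}A = A$ then $AA^{\#}A = A$ yields $AYA = A$. For $YAY$, the cleanest route is to substitute $A = BD^{\#}C$ inside $YA$ and apply $DB^{\#}B = D$ to obtain $YA = C^{\#}DD^{\#}C$, so that $YAY = C^{\#}DD^{\#}(CC^{\#}D)B^{\#} = C^{\#}DD^{\#}DB^{\#} = C^{\#}DB^{\#} = Y$, using $CC^{\#}D = D$ and the group-inverse identity $DD^{\#}D = D$. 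This establishes $Y \in (BD^{\#}C)\{1,2\}$.

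Part (ii) follows by the symmetric argument with $Z = B^{\#}AC^{\#}$, interchanging the roles of $A \leftrightarrow D$ and $B \leftrightarrow C$ and using the same four identities in their dual arrangement. The main conceptual point, and the reason the Moore-Penrose proof does not transfer literally, is that no cancellation of the form $C^{\#}CA^{\#} = A^{\#}$ is available from the hypotheses (this would require $R(A) \subseteq R(C)$, which is not implied by $K = L = 0$); so rather than collapsing $Y$ all the way to $A^{\#}$, one must arrange the substitutions so that the four available identities act on the correct side. The only real obstacle is this bookkeeping, after which both calculations are short.
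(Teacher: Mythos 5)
Your proposal is correct and follows essentially the same route as the paper: both derive the four identities $BB^{\#}A=A$, $AC^{\#}C=A$, $CC^{\#}D=D$, $DB^{\#}B=D$ from $K=L=0$ and then verify the two defining equations of a $\{1,2\}$-inverse by direct substitution (the paper computes $XYX$ and $YXY$ with $X=BD^{\#}C$ using only $CC^{\#}D=D$ and $DB^{\#}B=D$, while you reach the same conclusion with a slightly different arrangement of the substitutions). Your closing observation about why the cancellation $C^{\#}CA^{\#}=A^{\#}$ fails here is consistent with the paper's own remark that it is unknown whether $C^{\#}DB^{\#}=(BD^{\#}C)^{\#}$.
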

\begin{proof}
$(i)$: Since $L=0$, we have $A=BD^\#C$. So, $R(A)\subseteq R(B)$ and $N(C)\subseteq N(A)$. Thus $BB^\# A=A$. Since $N(C)\subseteq N(A)$ implies $R(A^*)\subseteq R(C^*)$. So, $C^*(C^*)^\# A^*=A^*$, which using $(C^*)^\#=(C^\#)^*$ and upon taking conjugates gives, $AC^\# C=A$. Similarly, since $D=CA^\# B$, we have $R(D)\subseteq R(C)$ and $N(B)\subseteq N(D)$. Thus $CC^\# D=D$ and $DB^\# B=D$. Let $Y=C^\#DB^\#$ and $X=BD^{\#}C$. Then
\begin{eqnarray*}
XYX&=&BD^{\#}CC^{\#}DB^{\#}BD^{\#}C \\
&=&BD^{\#}DB^{\#}BD^{\#}C \\
&=&BD^{\#}DD^{\#}C\\
&=& BD^{\#}C\\
&=& X
\end{eqnarray*}
and 
\begin{eqnarray*}
YXY&=&C^{\#}DB^{\#}BD^{\#}CC^{\#}DB^{\#}\\
&=&C^{\#}DD^{\#}DB^{\#} \\
&=& C^{\#}DB^{\#}\\
&=& Y
\end{eqnarray*}

$(ii)$: Similar to the proof of part $(i)$.
\end{proof}

\begin{rem}
In Theorem $\ref{revlaw1g}$, we do not know if one could conclude that $C^{\#}DB^{\#}=(BD^{\#}C)^{\#}$ and $B^{\#}AC^{\#}=(CA^{\#}B)^{\#}$.
\end{rem}

In the next result, we give a characterization for the implication $L=0 \implies K=0$ to hold.

\begin{thm}\label{limpliesk}
Let $M$ be defined as earlier such that $A$, $B$, $C$ and $D$ are group invertible matrices of the same order. Suppose that $L=0$. If $K=0$ then the matrices $B,C$ and $D$ satisfy $(I-CC^\#)D=0$, $D(I-B^\# B)=0$ and $C^\# D B^\# \in (BD^\# C)\{1,2\}$.
\end{thm}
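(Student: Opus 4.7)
The plan is to mimic the necessity half of Theorem~\ref{gimpliesf}, with Moore--Penrose identities replaced by their group-inverse counterparts. The only ingredients required are the defining identity $XX^{\#}X=X$ for the group inverse, and the reverse-order-law conclusion of Theorem~\ref{revlaw1g} that already applies under both $L=0$ and $K=0$.

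First I would invoke $K=0$ in the form $D=CA^{\#}B$ and compute $CC^{\#}D = CC^{\#}CA^{\#}B = CA^{\#}B = D$, which yields $(I-CC^{\#})D=0$. The symmetric calculation $DB^{\#}B = CA^{\#}BB^{\#}B = CA^{\#}B = D$ then gives $D(I-B^{\#}B)=0$. For the third conclusion, both $L=0$ and $K=0$ are in force by hypothesis, so Theorem~\ref{revlaw1g}$(i)$ applies directly and produces $C^{\#}DB^{\#} \in (BD^{\#}C)\{1,2\}$.

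No significant obstacle is anticipated; the argument is a routine adaptation of the Moore--Penrose case, and the hypothesis $L=0$ is used only indirectly, to unlock Theorem~\ref{revlaw1g}. The one point worth flagging is why the third conclusion is stated as a $\{1,2\}$-inverse relation rather than as the stronger identity $C^{\#}DB^{\#}=(BD^{\#}C)^{\#}$: this is precisely the open issue recorded in the remark following Theorem~\ref{revlaw1g}, and it is also the reason the present theorem is phrased as a one-way implication rather than an iff characterization in analogy with Theorem~\ref{gimpliesf}.
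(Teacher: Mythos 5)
Your proof is correct and is essentially identical to the paper's own argument: the first two conclusions follow from $D=CA^{\#}B$ together with $CC^{\#}C=C$ and $BB^{\#}B=B$, and the third is a direct citation of Theorem~\ref{revlaw1g}$(i)$ under the combined hypotheses $L=0$ and $K=0$. Your closing remark about why the statement is only a one-way implication also matches the paper's discussion.
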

\begin{proof}
Suppose that $K=0$ so that $D=CA^\# B$. Then 
\begin{center}
$CC^\# D= CC^\# CA^\# B=CA^\# B =D$
\end{center}
and so $(I-CC^\#)D=0$. Again, one has 
\begin{center}
$DB^\# B=CA^\# B B^\# B=CA^\# B=D$
\end{center}
and so $D(I-B^\# B) = 0$. Since $L=0$, by Theorem \ref{revlaw1g}, it follows that $C^\# D B^\# \in (BD^\# C)\{1,2\}$.
\end{proof}
Next, we give an example to show that the converse of Theorem \ref{limpliesk} is not true.
\begin{ex}
Let  $A=\begin{pmatrix}
-2 & 1\\
-2 & 1
\end{pmatrix}$, $B=\begin{pmatrix}
1 & 1\\
1 & 1
\end{pmatrix}$, $C=\begin{pmatrix}
2 & -1\\
-1 & 2
\end{pmatrix}$, $D=\begin{pmatrix}
-1 & -1\\
0 & 0
\end{pmatrix}$. Then $L=A-BD^{\#}C=0$, where $D^{\#}=-\frac{1}{2}\begin{pmatrix}
1 & 0\\
1 & 0
\end{pmatrix}$. Also, $R(D^*)\subseteq R(B^*)$ and $R(D)\subseteq R(C)$. Now, $A^{\#}=\frac{1}{10}\begin{pmatrix}
-2 & -2\\
1 & 1
\end{pmatrix}$, $B^{\#}=\frac{1}{4}\begin{pmatrix}
1 & 1\\
1 & 1
\end{pmatrix}$ and $C^{-1}=C^{\#}=\frac{1}{3}\begin{pmatrix}
2 & 1\\
1 & 2
\end{pmatrix}$. Then $C^{\#}DB^{\#}=
-\frac{1}{6}\begin{pmatrix}
2 & 2\\
1 & 1
\end{pmatrix}$. Also, $C^\# D B^\# \in (BD^\# C)\{1,2\}$ but $K=D-CA^{\#}B=-\frac{4}{5}\begin{pmatrix}
0 & 0\\
1 & 1
\end{pmatrix}\neq 0$
\end{ex}
The reverse implication $K=0 \implies L=0$ is considered next. The proof is similar to Theorem \ref{limpliesk}.

\begin{thm}\label{kimpliesl}
Let $M$ be defined as earlier. Suppose that $K=0$. If $L=0$ then $(I-BB^\#)A=0$, $A(I-C^\# C)=0$ and $B^\# AC^\# \in (CA^\# B)\{1,2\}$.
\end{thm}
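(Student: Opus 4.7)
The plan is to mirror the argument of Theorem \ref{limpliesk}, simply transposing the roles of $K$ and $L$ (and hence of the blocks $A$ and $D$). The whole argument is driven by the hypothesis $L=0$, i.e.\ $A=BD^{\#}C$; the hypothesis $K=0$ is invoked only in the last step.

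First I would derive the two ``rectangular'' identities. From $A=BD^{\#}C$, direct substitution gives
$BB^{\#}A=BB^{\#}BD^{\#}C=BD^{\#}C=A$
and
$AC^{\#}C=BD^{\#}CC^{\#}C=BD^{\#}C=A$,
which is exactly $(I-BB^{\#})A=0$ and $A(I-C^{\#}C)=0$. (Conceptually these are just the range inclusions $R(A)\subseteq R(B)$ and $R(A^{*})\subseteq R(C^{*})$ translated into group-inverse identities via the characterizations recorded in Section~2.)

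For the last conclusion $B^{\#}AC^{\#}\in(CA^{\#}B)\{1,2\}$, I would invoke Theorem~\ref{revlaw1g}(ii) directly: that theorem's hypotheses ($K=0$ and $L=0$) are both in force here, and its conclusion is precisely this $\{1,2\}$-inverse statement.

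There is no real obstacle; the proof is essentially mechanical once one notices the symmetry between $(K,A,D)$ and $(L,D,A)$ that relates this statement to Theorem~\ref{limpliesk}. The only point worth flagging is that, as in Theorem~\ref{limpliesk}, one cannot upgrade the $\{1,2\}$-inverse conclusion to a genuine group-inverse reverse-order law (cf.\ the remark following Theorem~\ref{revlaw1g}), so the statement is formulated with $\{1,2\}$-inverses rather than with $(CA^{\#}B)^{\#}$. As with Theorem~\ref{limpliesk}, one should also expect the converse to fail, and a small $2\times 2$ example analogous to the one given after Theorem~\ref{limpliesk} could be produced to confirm this, though it is not needed for the theorem itself.
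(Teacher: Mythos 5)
Your proposal is correct and matches what the paper intends: the paper omits this proof, stating only that it is similar to Theorem~\ref{limpliesk}, and your argument is exactly that transposition --- deriving $(I-BB^{\#})A=0$ and $A(I-C^{\#}C)=0$ from $L=0$ (i.e.\ $A=BD^{\#}C$) via $BB^{\#}B=B$ and $CC^{\#}C=C$, then citing Theorem~\ref{revlaw1g}(ii) for the $\{1,2\}$-inverse conclusion. Nothing further is needed.
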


\section{An Inheritance Property of Pseudo Schur Complements}
The intention here is to prove an inheritance property for the pseudo Schur complement and the complementary Schur complement. The property under consideration arises from the absorption law, rather well studied in the literature. For invertible elements $a,b$ in a ring $\cal R$ with unity, one has the equality $a^{-1} + b^{-1}=a^{-1}(a+b) b^{-1}$. It may be easily shown that this does not hold if the inverses are replaced by any one of the many generalized inverses. The question of characterizing when such formulae hold has been well investigated in the past few years. We shall be interested in the matrix version of the absorption law, where the usual inverse is replaced by either the Moore-Penrose inverse or the group inverse. In the first subsection, we study the absorption law for the Moore-Penrose inverse and in the second, the case of the group inverse is considered. It is noteworthy that all the proofs here are purely linear algebraic and so extensions of these results to operators between Hilbert 
spaces are almost immediate. 

\subsection{The Case of the Moore-Penrose Inverse}
First we prove a version of the absorption law expressed in terms of the Moore-Penrose inverse.

\begin{thm}\label{mpal}
Let $A,B \in \mathbb{C}^{m \times n}$. Then the following statments are equivalent:\\
$(i)$~$A^\dagger+B^\dagger=A^\dagger(A+B)B^\dagger$\\
$(ii)$~$R(B^*)\subseteq R(A^*)$ and $R(A)\subseteq R(B)$.
\end{thm}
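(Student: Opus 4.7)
The plan is to split the claim into the two implications and use only the range-inclusion characterizations recalled in the preliminaries: $R(X)\subseteq R(Y)\Leftrightarrow YY^{\dagger}X=X$ and $R(X^{*})\subseteq R(Y^{*})\Leftrightarrow XY^{\dagger}Y=X$, together with the basic identities $A^{\dagger}AA^{\dagger}=A^{\dagger}$ and the Hermitian-ness of $AA^{\dagger}$ and $A^{\dagger}A$. First I would expand the right-hand side of $(i)$ as
\[
A^{\dagger}(A+B)B^{\dagger}=A^{\dagger}AB^{\dagger}+A^{\dagger}BB^{\dagger},
\]
so the absorption law becomes $A^{\dagger}+B^{\dagger}=A^{\dagger}AB^{\dagger}+A^{\dagger}BB^{\dagger}$. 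The strategy for each direction is then to pick up $A^{\dagger}$ from one summand and $B^{\dagger}$ from the other.

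For sufficiency, assume the two range inclusions. From $R(A)\subseteq R(B)$ one gets $BB^{\dagger}A=A$; taking conjugate transposes and using $(BB^{\dagger})^{*}=BB^{\dagger}$ yields $A^{*}BB^{\dagger}=A^{*}$. Using $A^{\dagger}=(A^{*}A)^{\dagger}A^{*}$, left-multiplication gives $A^{\dagger}BB^{\dagger}=A^{\dagger}$. Dually, from $R(B^{*})\subseteq R(A^{*})$ one has $BA^{\dagger}A=B$, so conjugating gives $A^{\dagger}AB^{*}=B^{*}$, whence $A^{\dagger}AB^{\dagger}=B^{\dagger}$ (using $B^{\dagger}=B^{*}(BB^{*})^{\dagger}$). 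Adding the two equalities $A^{\dagger}AB^{\dagger}=B^{\dagger}$ and $A^{\dagger}BB^{\dagger}=A^{\dagger}$ is precisely the absorption law.

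For necessity, I would extract the two range inclusions by multiplying the hypothesis on suitable sides. Left-multiplying $A^{\dagger}+B^{\dagger}=A^{\dagger}AB^{\dagger}+A^{\dagger}BB^{\dagger}$ by $A$ and using $AA^{\dagger}A=A$ collapses this to
\[
AA^{\dagger}=AA^{\dagger}BB^{\dagger},
\]
i.e.\ $AA^{\dagger}(I-BB^{\dagger})=0$. Since $I-BB^{\dagger}$ projects onto $R(B)^{\perp}$ and $AA^{\dagger}$ projects onto $R(A)$, this gives $R(B)^{\perp}\subseteq R(A)^{\perp}$, that is $R(A)\subseteq R(B)$. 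Symmetrically, right-multiplying by $B$ and using $BB^{\dagger}B=B$ collapses the hypothesis to $B^{\dagger}B=A^{\dagger}AB^{\dagger}B$, and the analogous projector argument on $R(A^{*})$ and $R(B^{*})$ produces $R(B^{*})\subseteq R(A^{*})$.

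I do not expect a genuine obstacle here; the only thing to be careful with is keeping track of which side to multiply by to isolate each range condition and remembering that $AA^{\dagger}$ and $A^{\dagger}A$ are the orthogonal projectors onto $R(A)$ and $R(A^{*})$ respectively, which is what lets projector identities translate cleanly into range inclusions.
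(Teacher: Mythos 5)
Your proposal is correct and follows essentially the same route as the paper: both directions hinge on the decomposition $A^\dagger(A+B)B^\dagger=A^\dagger AB^\dagger+A^\dagger BB^\dagger$ and on identifying the two summands with $B^\dagger$ and $A^\dagger$ respectively. The only differences are cosmetic: for sufficiency the paper verifies the Penrose equations for $A^\dagger AB^\dagger$ and $A^\dagger BB^\dagger$ where you invoke the factorizations $A^\dagger=(A^*A)^\dagger A^*$ and $B^\dagger=B^*(BB^*)^\dagger$, and for necessity the paper reads the inclusions directly off the identity (e.g.\ $B^\dagger x=(A^\dagger(A+B)B^\dagger-A^\dagger)x\in R(A^\dagger)$) where you first collapse it to the projector identities $AA^\dagger=AA^\dagger BB^\dagger$ and $B^\dagger B=A^\dagger AB^\dagger B$.
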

\begin{proof}
$(i) \Longrightarrow (ii)$: Let $y \in R(B^*)=R(B^\dagger)$. Then $y=B^\dagger x=(A^\dagger (A+B)B^\dagger-A^\dagger)x,~x\in \mathbb{C}^{m}$. Thus $y \in R(A^\dagger)=R(A^*)$. So, $R(B^*)\subseteq R(A^*)$. Again, let $y\in R(A)=R((A^*)^\dagger)$. Then
\begin{eqnarray*}
y=(A^\dagger)^*x &=&(A^\dagger(A+B)B^\dagger-B^\dagger)^*x\\
                 &=& (B^\dagger)^*(A^*+B^*)(A^\dagger)^*x-(B^\dagger)^*x.
\end{eqnarray*}
So, $y \in R((B^\dagger)^*)=R(B)$. Hence, $R(A)\subseteq R(B)$.\\

$(ii)\Longrightarrow (i)$: We show that $A^\dagger AB^\dagger=B^\dagger$ and $A^\dagger BB^\dagger=A^\dagger$. Since $R(B^*)\subseteq R(A^*)$, we have $BA^\dagger A=B$. Set $X=A^\dagger AB^\dagger$. We show that $X=B^\dagger$. First, $BX=BA^\dagger AB^\dagger=BB^\dagger$. So $BXB=B$ and $(BX)^*=BX$. Further, 
\begin{eqnarray*}
XB&=&A^\dagger AB^\dagger B \\
 &=& (A^\dagger A)^*(B^\dagger B)^*\\
 &=&(B^\dagger BA^\dagger A)^*\\
 &=&(B^\dagger B)^*\\
 &=&B^\dagger B.
\end{eqnarray*}
Thus $(XB)^*=XB$. Also, $XBX=XBB^\dagger=A^\dagger AB^\dagger BB^\dagger=A^\dagger AB^\dagger=X$. Thus, $X=B^\dagger$.\\
Set $Y=A^\dagger BB^\dagger$. We show that $Y=A^\dagger$. Since $R(A)\subseteq R(B)$, we have $BB^\dagger A=A$. Then 
\begin{eqnarray*}
AY&=&AA^\dagger BB^\dagger\\
 &=&(AA^\dagger)^*(BB^\dagger)^*\\
 &=&(BB^\dagger AA^\dagger)^*\\
 &=&(AA^\dagger)^*\\
 &=&AA^\dagger.
\end{eqnarray*} Thus, $AYA=A$ and $(AY)^*=AY$. Also, $YA=A^\dagger BB^\dagger A=A^\dagger A$. Thus, $(YA)^*=YA$ and 
\begin{eqnarray*}
YAY &=&A^\dagger AA^\dagger BB^\dagger\\
&=&A^\dagger BB^\dagger\\
&=&Y.
\end{eqnarray*}
Thus $Y=A^\dagger$. Now, we have 
\begin{eqnarray*}
A^\dagger(A+B)B^\dagger&=&A^\dagger AB^\dagger+A^\dagger BB^\dagger\\
&=&X + Y\\
&=&B^\dagger+A^\dagger,
\end{eqnarray*}
completing the proof.
\end{proof}

Next, we give an example of Moore-Penrose absorbing pair.

\begin{ex}
Let $A=\begin{pmatrix}
\frac{1}{2} & \frac{1}{2} & 0\\
-2 & 1 & 0\\
0 & 0 & 0
\end{pmatrix}$ and $B=\begin{pmatrix}
2 & 1 & 0\\
2 & 2 & 0\\
0 & 0 & 0
\end{pmatrix}$. Then $A^\dagger=\begin{pmatrix}
\frac{2}{3} & \frac{1}{3} & 0\\
\frac{4}{3} & \frac{1}{3} & 0\\
0 & 0 & 0
\end{pmatrix}$ and $B^\dagger=\begin{pmatrix}
1 & -\frac{1}{2} & 0\\
-1 & 1 & 0\\
0 & 0 & 0
\end{pmatrix}$. Here, $R(B^*)\subseteq R(A^*)$ and $R(A)\subseteq R(B)$. Also, we can verify that $A^\dagger+B^\dagger=A^\dagger(A+B)B^\dagger$.
\end{ex}

\begin{defn}
Let $A,B \in \mathbb{C}^{m \times n}$. The pair $(A,B)$ is called a {\it Moore-Penrose inverse absorbing pair} if $A^\dagger+B^\dagger=A^\dagger(A+B)B^\dagger$.
\end{defn}

By Theorem \ref{mpal}, it follows that $(A,B)$ is a Moore-Penrose inverse absorbing pair if and only if $R(A) \subseteq R(B)$ and $R(B^*) \subseteq R(A^*)$. Clearly, $(A,A)$ is a Moore-Penrose inverse absorbing pair and that if either $(A,0)$ or $(0,A)$ is a Moore-Penrose inverse absorbing pair, then $A=0$. Also, $(A,B)$ is a Moore-Penrose inverse absorbing pair if and only if so is $(B^*,A^*)$. This, in turn holds if and only if $(B^{\dagger},A^{\dagger})$ is a Moore-Penrose inverse absorbing pair. It is also clear that if $(A,B)$ and $(B,C)$ are Moore-Penrose inverse absorbing pairs, then $(A,C)$ is a Moore-Penrose inverse absorbing pair. In particular, if we define a relation ``m'' on $\mathbb{C}^{m \times n}$ by $AmB$ if and only if $(A,B)$ is a Moore-Penrose inverse absorbing pair, then ``m'' is a reflexive and transitive relation. It is not difficult to observe that such a relation will be a partial order relation on the set of orthogonal projections on $\mathbb{C}^{n \times n}$. 

In what follows, we address the question of when the pair of pseudo Schur complements of a Moore-Penrose inverse absorbing pair of two block matrices inherits that property. 

\begin{thm}\label{abs_schur}
Let $U=\begin{pmatrix}
A_{U} & B_{U}\\
C_{U} & D_{U}
\end{pmatrix}$ and $V=\begin{pmatrix}
A_{V} & B_{V}\\
C_{V} & D_{V}
\end{pmatrix}$. Suppose that $R(B_{U})\subseteq R(A_{U})$, $R((A_{U}^\dagger B_{U})^*)\subseteq R(F_{U}^*)$, $R(C_{V}^*)\subseteq R(A_{V}^*)$ and $R(C_{V}A_{V}^\dagger)\subseteq R(F_{V})$, where $F_{U}=D_{U}-C_{U}A_{U}^\dagger B_{U}$ and $F_{V}=D_{V}-C_{V}A_{V}^\dagger B_{V}$. If $(U,V)$ is a Moore-Penrose inverse absorbing pair then $(F_{U},F_{V})$ is also a Moore-Penrose inverse absorbing pair.
\end{thm}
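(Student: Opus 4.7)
The plan is to invoke Theorem~\ref{mpal}, which characterizes Moore-Penrose absorbing pairs in terms of range inclusions: $(F_U,F_V)$ is absorbing if and only if $R(F_U)\subseteq R(F_V)$ and $R(F_V^*)\subseteq R(F_U^*)$. Thus it suffices to establish these two inclusions from the hypothesis that $(U,V)$ is an absorbing pair (equivalently, $R(U)\subseteq R(V)$ and $R(V^*)\subseteq R(U^*)$) together with the stated range conditions on the blocks.

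For $R(F_U)\subseteq R(F_V)$, my strategy is to ``extract'' $F_U$ from $U$ via a well-chosen preimage. Given any $y$, set $\tilde x=\begin{pmatrix}-A_U^\dagger B_Uy\\ y\end{pmatrix}$. The hypothesis $R(B_U)\subseteq R(A_U)$ gives $A_UA_U^\dagger B_U=B_U$, and a direct block multiplication then yields $U\tilde x=\begin{pmatrix}0\\ F_Uy\end{pmatrix}$. Since $R(U)\subseteq R(V)$, there exist $w_1,w_2$ with $A_Vw_1+B_Vw_2=0$ and $C_Vw_1+D_Vw_2=F_Uy$. Substituting $B_Vw_2=-A_Vw_1$ in the definition $F_Vw_2=D_Vw_2-C_VA_V^\dagger B_Vw_2$ and using $R(C_V^*)\subseteq R(A_V^*)$ (which gives $C_VA_V^\dagger A_V=C_V$) simplifies $F_Vw_2$ to $D_Vw_2+C_Vw_1=F_Uy$, proving the inclusion.

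For $R(F_V^*)\subseteq R(F_U^*)$, I would apply the same argument to the pair $(V^*,U^*)$ in place of $(U,V)$. Here $R(V^*)\subseteq R(U^*)$ plays the role of the original column-range hypothesis, and upon conjugate-transposing the block structure, the off-diagonal blocks are swapped (the $B$-type block of $V^*$ is $C_V^*$, and the $C$-type block of $U^*$ is $B_U^*$), so the hypotheses $R(C_V^*)\subseteq R(A_V^*)$ and $R(B_U)\subseteq R(A_U)$ become precisely the analogues required. Using the identity $(A^*)^\dagger=(A^\dagger)^*$, one verifies that the pseudo Schur complement of $V^*$ relative to $A_V^*$ equals $F_V^*$ (and similarly for $U^*$), so the symmetric argument yields $R(F_V^*)\subseteq R(F_U^*)$. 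Theorem~\ref{mpal} then concludes.

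The main technical obstacle is the symmetric bookkeeping: one must carefully verify that conjugate-transposition of the block matrix exchanges the column-range and row-range hypotheses in the expected way, and that $F_{U^*}=F_U^*$. One remark worth noting is that the above approach uses only $R(B_U)\subseteq R(A_U)$ and $R(C_V^*)\subseteq R(A_V^*)$; the remaining hypotheses $R((A_U^\dagger B_U)^*)\subseteq R(F_U^*)$ and $R(C_VA_V^\dagger)\subseteq R(F_V)$ appear to become essential only if one bypasses Theorem~\ref{mpal} and instead verifies the absorption identity $F_U^\dagger+F_V^\dagger=F_U^\dagger(F_U+F_V)F_V^\dagger$ by direct computation, where these extra inclusions are what allow $F_U^\dagger$ and $F_V^\dagger$ to be manipulated using the block structure.
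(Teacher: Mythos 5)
Your proposal is correct, but it takes a genuinely different route from the paper. The paper works with null spaces: it takes $x\in N(F_U)$, uses the hypothesis $R((A_U^\dagger B_U)^*)\subseteq R(F_U^*)$ (equivalently $N(F_U)\subseteq N(A_U^\dagger B_U)$) to decouple $F_Ux=0$ into $D_Ux=0$ and $B_Ux=0$, embeds $x$ as $\begin{pmatrix}0\\ x\end{pmatrix}\in N(U)\subseteq N(V)$, and concludes $F_Vx=0$; the dual inclusion $N(F_V^*)\subseteq N(F_U^*)$ is handled symmetrically using $R(C_VA_V^\dagger)\subseteq R(F_V)$. You instead prove the range inclusions directly by exhibiting the explicit preimage $\tilde x=\begin{pmatrix}-A_U^\dagger B_Uy\\ y\end{pmatrix}$ with $U\tilde x=\begin{pmatrix}0\\ F_Uy\end{pmatrix}$ and pulling this vector back through $V$ --- the classical ``Schur complement factors through the block matrix'' device. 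Both arguments establish the same pair of inclusions (null-space inclusions and range inclusions of the adjoints being equivalent), and both invoke Theorem~\ref{mpal} to finish, so they are dual in spirit; your bookkeeping for the adjoint pair $(V^*,U^*)$, including $F_{U^*}=F_U^*$, checks out. What your version buys is economy: as you observe, it uses only $R(B_U)\subseteq R(A_U)$ and $R(C_V^*)\subseteq R(A_V^*)$, whereas the paper's null-space route genuinely consumes the other two hypotheses ($R((A_U^\dagger B_U)^*)\subseteq R(F_U^*)$ and $R(C_VA_V^\dagger)\subseteq R(F_V)$) in order to split $F_Ux=0$ and $F_V^*x=0$ into vanishing of the individual blocks. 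Your argument therefore proves a slightly stronger statement, showing those two hypotheses to be artifacts of the paper's method rather than of the result.
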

\begin{proof}
Since $(U,V)$ is a Moore-Penrose inverse absorbing pair, by Theorem \ref{mpal}, we then have $R(U)\subseteq R(V)$ and $R(V^{*})\subseteq R(U^{*})$. Let $x\in N(F_{U})$. Then 
\begin{center}
$0=F_{U}x=(D_{U}-C_{U}A_{U}^\dagger B_{U})x$.                                                                                                                                                                 \end{center}
Also, $N(F_{U})\subseteq N(A_{U}^\dagger B_{U})$. So, $D_{U}x=0$. Finally, since $A_{U}^\dagger B_{U}x=0$, on premultiplying by $A_{U}$ and using the fact that $A_{U}A_{U}^\dagger B_{U}=B_{U}$, we get $B_{U}x=0$. Set $z=\begin{pmatrix}
0\\
x
\end{pmatrix}$. Then $Uz=\begin{pmatrix}
B_{U}x\\
D_{U}x
\end{pmatrix}=0$. Thus $z\in N(U)$ and hence $x\in N(V)$. So 
\begin{center}
$0=Vz=\begin{pmatrix}
A_{V} & B_{V}\\
C_{V} & D_{V}
\end{pmatrix}
\begin{pmatrix}
0\\
x
\end{pmatrix}
=
\begin{pmatrix}
B_{V}x\\
D_{V}x
\end{pmatrix}$,
\end{center}
so that $B_{V}x=0$ and $D_{V}x=0$. Thus, 
\begin{center}
$F_{V}x=(D_{V}-C_{V}A_{V}^\dagger B_{V})x=0$
\end{center}
and hence $N(F_{U})\subseteq N(F_{V})$.\\
Next, we prove that $N(F_{V}^*)\subseteq N(F_{U}^*)$. Let $x\in N(F_{V}^*)$. Then 
\begin{center}
$0=F_{V}^*x=(D_{V}-C_{V}A_{V}^\dagger B_{V})^*x$.
\end{center}
Using the fact that $N(F_{V}^*)\subseteq N((C_{V}A_{V}^\dagger)^*)$, we then have $(C_{V}A_{V}^\dagger)^*x=0$ and hence $D_{V}^*x=B_{V}^*(C_{V}A_{V}^\dagger)^*x=0$. Since $R(C_{V}^*)\subseteq R(A_{V}^*)$, we have $C_{V}A_{V}^\dagger A_{V}=C_{V}$. So, upon premultiplying $(C_{V}A_{V}^\dagger)^*x=0$ by $A_{V}^*$, we have 
\begin{center}
$0=A_{V}^*(C_{V}A_{V}^\dagger)^*x=(C_{V}A_{V}^\dagger A_{V})^*x=C_{V}^*x$.
\end{center}
Set $y=\begin{pmatrix}
0\\
x
\end{pmatrix}$. Then 
\begin{center}
$V^Ty=\begin{pmatrix}
A_{V}^* & C_{V}^*\\
B_{V}^* & D_{V}^*
\end{pmatrix}
\begin{pmatrix}
0\\
x
\end{pmatrix}=
\begin{pmatrix}
C_{V}^*x\\
D_{V}^*x
\end{pmatrix}=\begin{pmatrix}
0\\
0
\end{pmatrix}$. 
\end{center}
Thus $y\in N(V^*)$. Since $(U,V)$ is a Moore-Penrose inverse absorbing pair, we have $N(V^*) \subseteq N(U^*)$ and thus $y\in N(U^*)$. So 
\begin{center}
$0=U^*y=\begin{pmatrix}
A_{U}^* & C_{U}^*\\
B_{U}^* & D_{U}^*
\end{pmatrix}
\begin{pmatrix}
0\\
x
\end{pmatrix}=\begin{pmatrix}
C_{U}^*x\\
D_{U}^*x
\end{pmatrix}$
\end{center}
and so $C_{U}^*x=0$ and $D_{U}^*x=0$. Hence 
\begin{center}
$F_{U}^*x=D_{U}^*x-(A_{U}^\dagger B_{U})^*C_{U}^*x=0$.
\end{center}
Thus $x\in N(F_{U}^*)$ proving that $N(F_{V}^*)\subseteq N(F_{U}^*)$. By Thoerem \ref{mpal}, it follows that $(F_{U},F_{V})$ is a Moore-Penrose inverse absorbing pair.
\end{proof}

An analogoue for the pair of complementary pseudo Schur complements is given next. The proof is similar to the proof of Theorem \ref{abs_schur} and is omitted.

\begin{thm}
Let $U$ and $V$ be defined as above. Suppose that $R(C_{U})\subseteq R(D_{U})$, $R((D_{U}^\dagger C_{U})^*)\subseteq R(G_{U}^*)$, $R(B_{V}^*)\subseteq R(D_{V}^*)$ and $R(B_{V}D_{V}^\dagger)\subseteq R(G_{V})$, where $G_{U}=A_{U}-B_{U}D_{U}^\dagger C_{U}$ and $G_{V}=A_{V}-B_{V}D_{V}^\dagger C_{V}$. If $(U,V)$ is a Moore-Penrose inverse absorbing pair then $(G_{U},G_{V})$ is also a Moore-Penrose inverse absorbing pair.
\end{thm}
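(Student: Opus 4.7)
My plan is to mimic the proof of Theorem \ref{abs_schur} with the roles of $A$ and $D$, and of $B$ and $C$, interchanged. By Theorem \ref{mpal}, the hypothesis that $(U,V)$ is a Moore-Penrose inverse absorbing pair is equivalent to $R(U)\subseteq R(V)$ and $R(V^*)\subseteq R(U^*)$; equivalently, $N(V^*)\subseteq N(U^*)$ and $N(U)\subseteq N(V)$. To establish the conclusion, I must verify the two corresponding inclusions for $G_U$ and $G_V$, namely $N(G_U)\subseteq N(G_V)$ and $N(G_V^*)\subseteq N(G_U^*)$; another application of Theorem \ref{mpal} will then finish the proof.

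For the first inclusion, I take $x\in N(G_U)$, so $(A_U-B_U D_U^\dagger C_U)x=0$. The hypothesis $R((D_U^\dagger C_U)^*)\subseteq R(G_U^*)$ is equivalent to $N(G_U)\subseteq N(D_U^\dagger C_U)$, which yields $D_U^\dagger C_U x=0$ and therefore $A_U x=0$. Premultiplying $D_U^\dagger C_U x=0$ by $D_U$ and using $R(C_U)\subseteq R(D_U)$ (which gives $D_U D_U^\dagger C_U=C_U$), I obtain $C_U x=0$. Embedding $x$ as $y=\begin{pmatrix} x \\ 0 \end{pmatrix}$, I compute $Uy=\begin{pmatrix} A_U x \\ C_U x \end{pmatrix}=0$, so $y\in N(U)\subseteq N(V)$. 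Reading off the blocks of $Vy=0$ gives $A_V x=0$ and $C_V x=0$, whence $G_V x=(A_V-B_V D_V^\dagger C_V)x=0$, as required.

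For the second inclusion, I take $x\in N(G_V^*)$, so $A_V^* x-C_V^*(D_V^\dagger)^*B_V^* x=0$. The hypothesis $R(B_V D_V^\dagger)\subseteq R(G_V)$ is equivalent to $N(G_V^*)\subseteq N((B_V D_V^\dagger)^*)$, giving $(D_V^\dagger)^* B_V^* x=0$ and hence $A_V^* x=0$. Premultiplying by $D_V^*$ and using $R(B_V^*)\subseteq R(D_V^*)$ (which, via the background in Section 2, gives $B_V D_V^\dagger D_V=B_V$, so $D_V^*(D_V^\dagger)^* B_V^*=B_V^*$), I conclude $B_V^* x=0$. Embedding $x$ as $y=\begin{pmatrix} x \\ 0 \end{pmatrix}$, I obtain $V^* y=\begin{pmatrix} A_V^* x \\ B_V^* x \end{pmatrix}=0$, so $y\in N(V^*)\subseteq N(U^*)$. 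Reading off the components of $U^* y=0$ gives $A_U^* x=0$ and $B_U^* x=0$, and therefore $G_U^* x=A_U^* x-(D_U^\dagger C_U)^* B_U^* x=0$.

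I do not anticipate any genuine obstacle: the arithmetic is the exact dual of the proof of Theorem \ref{abs_schur}. The only points that require care are (a) placing $x$ in the \emph{top} block of $y$ (rather than the bottom, as in Theorem \ref{abs_schur}), since the complementary Schur complement $G$ is built with respect to $D$ rather than $A$, and (b) correctly translating each range inclusion hypothesis into the null space statement needed at that step, via the standard equivalences $R(X)\subseteq R(Y)\iff YY^\dagger X=X\iff N(Y^*)\subseteq N(X^*)$ recalled in the Preliminaries.
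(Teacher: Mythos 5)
Your proposal is correct and is exactly the argument the paper intends: the paper omits this proof as "similar to the proof of Theorem \ref{abs_schur}", and your dualization (swapping the roles of $A_U,D_U$ and $B_U,C_U$, placing $x$ in the top block, and translating each range-inclusion hypothesis into the corresponding null-space inclusion) carries through without any gap. Both inclusions $N(G_U)\subseteq N(G_V)$ and $N(G_V^*)\subseteq N(G_U^*)$ are established correctly, and Theorem \ref{mpal} then yields the conclusion.
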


\subsection{The Group Inverse Version}
In this second part, we present versions of the results of the previous subsection for the group inverse. First we prove a theorem on the absorption law for the group inverse, analogous to Theorem \ref{mpal}.

\begin{thm}\label{gpal}
Let $A, B \in \mathbb{C}^{n \times n}$ have index $1$. Then the following statements are equivalent:\\
$(i)~A^\#+B^\#=A^\#(A+B)B^\#$.\\
$(ii)~R(A^*)\subseteq R(B^*)$ and $R(B)\subseteq R(A)$.
\end{thm}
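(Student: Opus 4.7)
For the direction (ii) $\Rightarrow$ (i), the plan is to establish the two one-sided absorption identities $A^{\#}AB^{\#}=B^{\#}$ and $A^{\#}BB^{\#}=A^{\#}$; adding them then gives (i). For the first identity, note that $A^{\#}A\,(=AA^{\#})$ is idempotent with $R(A^{\#}A)=R(A)$ and $N(A^{\#}A)=N(A)$ (this follows directly from $AA^{\#}A=A$, $A^{\#}A A^{\#}=A^{\#}$, and $R(A^{\#})=R(A)$, all listed in the preliminaries). Hence $A^{\#}A$ acts as the identity on $R(A)$. Since $R(B^{\#})=R(B)\subseteq R(A)$ by hypothesis, each column of $B^{\#}$ lies in $R(A)$, giving $A^{\#}AB^{\#}=B^{\#}$. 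For the second identity, $I-BB^{\#}$ is idempotent with range $N(B)$ (use $BB^{\#}B=B$ to check idempotence and $B^{\#}BB^{\#}=B^{\#}$ to identify the range). The hypothesis $R(A^{*})\subseteq R(B^{*})$ is equivalent to $N(B)\subseteq N(A)=N(A^{\#})$, so $A^{\#}(I-BB^{\#})=0$, i.e.\ $A^{\#}BB^{\#}=A^{\#}$.

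For (i) $\Rightarrow$ (ii), the strategy is to rearrange (i) as
\[
B^{\#} \;=\; A^{\#}(A+B)B^{\#}-A^{\#},
\]
which places every column of $B^{\#}$ in $R(A^{\#})=R(A)$; since $R(B)=R(B^{\#})$, this gives $R(B)\subseteq R(A)$. To obtain the other inclusion, apply the conjugate transpose to (i) and use $(X^{\#})^{*}=(X^{*})^{\#}$ to get
\[
(A^{*})^{\#}+(B^{*})^{\#} \;=\; (B^{*})^{\#}\bigl((A^{*})+(B^{*})\bigr)(A^{*})^{\#},
\]
which is precisely the absorption identity of (i) with $(A,B)$ replaced by the pair $(B^{*},A^{*})$. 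Applying the first deduction to this new pair yields $R(A^{*})\subseteq R(B^{*})$.

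The main obstacle is conceptual rather than computational: in the Moore-Penrose case (Theorem \ref{mpal}) the products $A^{\dagger}A$ and $BB^{\dagger}$ are Hermitian orthogonal projectors, and Hermiticity was the engine driving the verification of the four Penrose equations. For the group inverse, $A^{\#}A$ and $BB^{\#}$ are only oblique idempotents, so that style of argument is unavailable. The workaround is to exploit the direct sum decomposition $\mathbb{C}^{n}=R(A)\oplus N(A)$ guaranteed by the existence of $A^{\#}$, and to view $A^{\#}A$ and $BB^{\#}$ as the associated oblique projectors; once these projector identifications are in hand, both inclusions in (ii) translate immediately into the desired one-sided absorptions, and conversely the one-sided absorption identities force the inclusions in (ii).
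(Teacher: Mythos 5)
Your proposal is correct and follows essentially the same route as the paper: both directions reduce to the two one-sided absorption identities $A^{\#}AB^{\#}=B^{\#}$ and $A^{\#}BB^{\#}=A^{\#}$, with the second inclusion in (ii) (respectively the second identity) obtained by passing to conjugate transposes and using $(X^{\#})^{*}=(X^{*})^{\#}$. Your phrasing of the one-sided identities via the oblique projectors $A^{\#}A$ and $I-BB^{\#}$ is just a mildly more explicit rendering of the paper's appeal to the range-inclusion criterion $R(X)\subseteq R(Y)\Leftrightarrow YY^{\#}X=X$.
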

\begin{proof}
$(i)\implies(ii)$: Let $y \in R(B)=R(B^\#)$. Then for some $x\in \mathbb{C}^{n}$ one has 
\begin{center}
$y =  B^\# x = (A^\# (A+B)B^\#-A^\#)x$.
\end{center}
Thus $y \in R(A^\#)=R(A)$. So, $R(B)\subseteq R(A)$. Again, let $y\in R(A^{*})=R((A^\#)^*)$. Then
\begin{eqnarray*}
y&=&(A^\#)^*x \\
&=&(A^\#(A+B)B^\#-B^\#)^*x\\
&=&((B^\#)^*-(B^\#)^*(A^*+B^*)(A^\#)^*)x.
\end{eqnarray*}
So, $y \in R((B^\#)^*)=R(B^*)$. Hence, $R(A^*)\subseteq R(B^*)$.\\
$(ii)\implies(i)$: Since $R(B^\#)=R(B)\subseteq R(A)=R(A^\#)$, we then have $A^\# AB^\#=B^\#$. Also, 
\begin{center}
$R((A^*)^\#)=R((A^\#)^*)=R(A^*)\subseteq R(B^*)=R((B^\#)^*)=R((B^*)^\#)$
\end{center}
So, by what we discussed just now, we have  $(B^*)^\# B^*(A^*)^\#=(A^*)^\#.$ Thus, $(B^\#)^*B^*(A^\#)^*=(A^\#)^*.$ Upon taking conjugates, we get 
$A^\# BB^\#=A^\#$. So, 
\begin{eqnarray*}
A^\#(A+B)B^\#&=&A^\# AB^\#+A^\# BB^\# \\
&=&B^\#+A^\#,
\end{eqnarray*}
completing the proof.
\end{proof}

\begin{defn}
Let $A,B \in \mathbb{C}^{n \times n}$ both be group invertible. The pair $(A,B)$ is called a {\it group inverse absorbing pair} if $A^\#+B^\#=A^\#(A+B)B^\#$.
\end{defn}

By Theorem \ref{gpal}, it follows that $(A,B)$ is a group inverse absorbing pair if and only if $R(A^*) \subseteq R(B^*)$ and $R(B) \subseteq R(A)$. Clearly, if $A$ is group invertible, then $(A,A)$ is a group inverse absorbing pair. If $(A,0)$ or $(0,A)$ are group inverse absorbing pairs, then $A=0$. Also, $(A,B)$ is a group inverse absorbing pair if and only if so is $(B^*,A^*)$. This, in turn holds if and only if $(A^{\#},B^{\#})$ is a group inverse absorbing pair. This marks a departure from the corresponding result for the Moore-Penrose inverse. It is also clear that if $(A,B)$ and $(B,C)$ are group inverse absorbing pairs, then $(A,C)$ is a group inverse absorbing pair. In particular, if we define a relation ``g'' on the subset of all group invertible matrices in $\mathbb{C}^{n \times n}$ by $AgB$ if and only if $(A,B)$ is a group inverse absorbing pair, then ``g'' is a reflexive and transitive relation. It is also easy to see that ``g'' will be a partial order relation on the subset of all idempotent 
matrices in $\mathbb{C}^{n \times n}$.

Next, we obtain a group inverse analogue of Theorem \ref{abs_schur}. 

\begin{thm}
Let $U=\begin{pmatrix}
A_{U} & B_{U}\\
C_{U} & D_{U}
\end{pmatrix}$ and $V=\begin{pmatrix}
A_{V} & B_{V}\\
C_{V} & D_{V}
\end{pmatrix}$. Assume that $A_{U}^{\#}$ and $A_{V}^{\#}$ exist. Suppose that $R(B_{V})\subseteq R(A_{V})$, $R((A_{V}^\# B_{V})^*)\subseteq R(K_{V}^*)$, $R(C_{U}^*)\subseteq R(A_{U}^*)$ and $R(C_{U}A_{U}^\#)\subseteq R(K_{U})$, where $K_{U}=D_{U}-C_{U}A_{U}^\# B_{U}$ and $K_{V}=D_{V}-C_{V}A_{V}^\# B_{V}$. If $(U,V)$ is a group inverse absorbing pair then $(K_{U},K_{V})$ is also a group inverse absorbing pair.
\end{thm}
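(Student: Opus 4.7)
The plan is to convert the group inverse absorbing condition into statements about null spaces and then mimic the proof of Theorem \ref{abs_schur}, but with the roles of $U$ and $V$ interchanged. Specifically, by Theorem \ref{gpal} the hypothesis that $(U,V)$ is a group inverse absorbing pair is equivalent to $R(U^*)\subseteq R(V^*)$ and $R(V)\subseteq R(U)$, which by orthogonal complementation is the same as $N(V)\subseteq N(U)$ and $N(U^*)\subseteq N(V^*)$. Similarly, the goal $R(K_U^*)\subseteq R(K_V^*)$ and $R(K_V)\subseteq R(K_U)$ is equivalent to $N(K_V)\subseteq N(K_U)$ and $N(K_U^*)\subseteq N(K_V^*)$. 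These are the two inclusions to establish.

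For the first inclusion, I would take $x\in N(K_V)$ and exploit the two hypotheses attached to $V$. The inclusion $R((A_V^\# B_V)^*)\subseteq R(K_V^*)$ gives $N(K_V)\subseteq N(A_V^\# B_V)$, so $A_V^\# B_V x=0$, whence $D_V x=0$. The inclusion $R(B_V)\subseteq R(A_V)$ gives $A_V A_V^\# B_V=B_V$, so premultiplying by $A_V$ produces $B_V x=0$. Setting $z=\begin{pmatrix} 0\\ x\end{pmatrix}$ yields $Vz=0$, so $z\in N(V)\subseteq N(U)$, which forces $B_U x=0$ and $D_U x=0$, and therefore $K_U x=0$.

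For the second inclusion, I would take $x\in N(K_U^*)$ and instead use the two hypotheses attached to $U$. The inclusion $R(C_U A_U^\#)\subseteq R(K_U)$ translates to $N(K_U^*)\subseteq N((A_U^\#)^* C_U^*)$, so $(A_U^\#)^* C_U^* x=0$ and hence $D_U^* x=0$. The inclusion $R(C_U^*)\subseteq R(A_U^*)$ gives $C_U A_U^\# A_U=C_U$; taking adjoints and premultiplying by $A_U^*$ yields $C_U^* x=0$. Setting $y=\begin{pmatrix} 0\\ x\end{pmatrix}$ makes $U^* y=0$, so $y\in N(U^*)\subseteq N(V^*)$, which forces $C_V^* x=0$ and $D_V^* x=0$, and hence $K_V^* x=0$. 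Finally, applying Theorem \ref{gpal} in the reverse direction to the two established null-space inclusions closes the argument.

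The principal point requiring attention is the swap of roles compared to Theorem \ref{abs_schur}: in the Moore--Penrose case the absorbing hypothesis gave $R(U)\subseteq R(V)$, so the hypotheses were arranged around $U$ on the range side and around $V$ on the cokernel side; here the direction of both inclusions is reversed, so the hypotheses $R(B_V)\subseteq R(A_V)$ and $R((A_V^\# B_V)^*)\subseteq R(K_V^*)$ must be paired with the null-space argument on $K_V$, while $R(C_U^*)\subseteq R(A_U^*)$ and $R(C_U A_U^\#)\subseteq R(K_U)$ must be paired with the cokernel argument on $K_U$. The only mild technical obstacle is the shift between $R(\cdot)\subseteq R(\cdot)$ phrasing and the $N(\cdot)\subseteq N(\cdot)$ consequences used inside each step, together with the switch between $X$ and $X^*$ via the identity $(A^\#)^*=(A^*)^\#$; no additional properties of the group inverse beyond those already recalled in the Preliminaries are needed.
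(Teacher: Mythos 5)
Your proposal is correct and follows essentially the same route as the paper's own proof: both convert the absorbing hypothesis into the null-space inclusions $N(V)\subseteq N(U)$ and $N(U^*)\subseteq N(V^*)$, use the hypotheses on $V$ (resp.\ $U$) to show $N(K_V)\subseteq N(K_U)$ (resp.\ $N(K_U^*)\subseteq N(K_V^*)$) via the vectors $\begin{pmatrix}0\\ x\end{pmatrix}$, and close with Theorem \ref{gpal}. Your observation about the reversal of roles relative to Theorem \ref{abs_schur} is exactly the adjustment the paper makes.
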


\begin{proof}
Since $(U,V)$ is a group inverse absorbing pair, by Theorem \ref{gpal}, we then have $R(V)\subseteq R(U)$ and $R(U^{*})\subseteq R(V^{*})$. Let $x\in N(K_{V})$. Then 
\begin{center}
$0=K_{V}x=(D_{V}-C_{V}A_{V}^\# B_{V})x$.                                                                                                                                                                
\end{center}
Also, $N(K_{V})\subseteq N(A_{V}^\# B_{V})$. So, $D_{V}x=0$. Finally, since $A_{V}^\# B_{V}x=0$, on premultiplying by $A_{V}$ and using the fact that $A_{V}A_{V}^\# B_{V}=B_{V}$, we get $B_{V}x=0$. Set $z=\begin{pmatrix}
0\\
x
\end{pmatrix}$. Then $Vz=\begin{pmatrix}
B_{V}x\\
D_{V}x
\end{pmatrix}=0$. Thus $z\in N(V)$ and hence $x\in N(U)$. So 
\begin{center}
$0=Uz=\begin{pmatrix}
A_{U} & B_{U}\\
C_{U} & D_{U}
\end{pmatrix}
\begin{pmatrix}
0\\
x
\end{pmatrix}
=
\begin{pmatrix}
B_{U}x\\
D_{U}x
\end{pmatrix}$,
\end{center}
so that $B_{U}x=0$ and $D_{U}x=0$. Thus, 
\begin{center}
$K_{U}x=(D_{U}-C_{U}A_{U}^\# B_{U})x=0$
\end{center}
and hence $N(K_{V})\subseteq N(K_{U})$.\\
Next, we prove that $N(K_{U}^*)\subseteq N(K_{V}^*)$. Let $x\in N(K_{U}^*)$. Then 
\begin{center}
$0=K_{U}^*x=(D_{U}-C_{U}A_{U}^\# B_{U})^*x$.
\end{center}
Using the fact that $N(K_{U}^*)\subseteq N((C_{U}A_{U}^\#)^*)$, we then have $(C_{U}A_{U}^\#)^*x=0$ and hence $D_{U}^*x=B_{U}^*(C_{U}A_{U}^\#)^*x=0$. Since $R(C_{U}^*)\subseteq R(A_{U}^*)$, we have $C_{U}A_{U}^\# A_{U}=C_{U}$. So, upon premultiplying $(C_{U}A_{U}^\#)^*x=0$ by $A_{U}^*$, we have 
\begin{center}
$0=A_{U}^*(C_{U}A_{U}^\#)^*x=(C_{U}A_{U}^\# A_{U})^*x=C_{U}^*x$.
\end{center}
Set $y=\begin{pmatrix}
0\\
x
\end{pmatrix}$. Then 
\begin{center}
$U^*y=\begin{pmatrix}
A_{U}^* & C_{U}^*\\
B_{U}^* & D_{U}^*
\end{pmatrix}
\begin{pmatrix}
0\\
x
\end{pmatrix}=
\begin{pmatrix}
C_{U}^*x\\
D_{U}^*x
\end{pmatrix}=\begin{pmatrix}
0\\
0
\end{pmatrix}$. 
\end{center}
Thus $y\in N(U^*)$. Since $(U,V)$ is a group inverse absorbing pair, we have $N(U^*) \subseteq N(V^*)$ and thus $y\in N(V^*)$. So 
\begin{center}
$0=V^*y=\begin{pmatrix}
A_{V}^* & C_{V}^*\\
B_{V}^* & D_{V}^*
\end{pmatrix}
\begin{pmatrix}
0\\
x
\end{pmatrix}=\begin{pmatrix}
C_{V}^*x\\
D_{V}^*x
\end{pmatrix}$
\end{center}
and so $C_{V}^*x=0$ and $D_{V}^*x=0$. Hence 
\begin{center}
$K_{V}^*x=D_{V}^*x-(A_{V}^\# B_{V})^*C_{V}^*x=0$.
\end{center}
Thus $x\in N(K_{V}^*)$ proving that $N(K_{U}^*)\subseteq N(K_{V}^*)$. By Thoerem \ref{gpal}, it follows that $(K_{U},K_{V})$ is a group inverse absorbing pair.
\end{proof}

An analogous result for the complementary pseudo Schur complements is given next. The proof is omitted.

\begin{thm}
Let $U$ and $V$ be defined as above. Assume that $D_{U}^{\#}$ and $D_{V}^{\#}$ exist. Suppose that $R(C_{V})\subseteq R(D_{V})$, $R((D_{V}^\# C_{V})^*)\subseteq R(L_{V}^*)$, $R(B_{U}^*)\subseteq R(D_{U}^*)$ and $R(B_{U}D_{U}^\#)\subseteq R(L_{U})$, where $L_{V}=A_{V}-B_{V}D_{V}^\# C_{V}$ and $L_{U}=A_{U}-B_{U}D_{U}^\# C_{U}$. If $(U,V)$ is a group inverse absorbing pair then $(L_{U},L_{V})$ is also a group inverse absorbing pair.
\end{thm}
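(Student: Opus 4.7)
The plan is to mirror the proof of the preceding theorem, with the roles of $A$ and $D$, and of $B$ and $C$, systematically swapped. By Theorem~\ref{gpal}, the hypothesis that $(U,V)$ is a group inverse absorbing pair is equivalent to $R(V)\subseteq R(U)$ and $R(U^*)\subseteq R(V^*)$, which translate to the null space inclusions $N(V)\subseteq N(U)$ and $N(U^*)\subseteq N(V^*)$. The target conclusion, again via Theorem~\ref{gpal}, amounts to proving $N(L_V)\subseteq N(L_U)$ and $N(L_U^*)\subseteq N(L_V^*)$, and these are the two inclusions I will establish.

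For $N(L_V)\subseteq N(L_U)$, my plan is: take $x\in N(L_V)$; use the hypothesis $R((D_V^\# C_V)^*)\subseteq R(L_V^*)$ (equivalently $N(L_V)\subseteq N(D_V^\# C_V)$) to deduce $D_V^\# C_V x=0$ and hence $A_V x=0$; premultiply by $D_V$ and use $R(C_V)\subseteq R(D_V)$, which gives $D_V D_V^\# C_V = C_V$, to obtain $C_V x=0$; form $z=\begin{pmatrix}x\\0\end{pmatrix}$ so that $Vz=0$; invoke $N(V)\subseteq N(U)$ to conclude $Uz=0$, which reads off as $A_U x=0$ and $C_U x=0$, whence $L_U x = A_U x - B_U D_U^\# C_U x = 0$.

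For $N(L_U^*)\subseteq N(L_V^*)$, the parallel plan is: take $x\in N(L_U^*)$; use the hypothesis $R(B_U D_U^\#)\subseteq R(L_U)$ (equivalently $N(L_U^*)\subseteq N((D_U^*)^\# B_U^*)$) to deduce $(D_U^*)^\# B_U^* x=0$, which forces the off-diagonal term in $L_U^*$ to vanish on $x$ and leaves $A_U^* x=0$; use $R(B_U^*)\subseteq R(D_U^*)$ — which, after passing through conjugates via $(D_U^*)^\#=(D_U^\#)^*$, becomes $D_U^*(D_U^*)^\# B_U^* = B_U^*$ — to premultiply $(D_U^*)^\# B_U^* x=0$ by $D_U^*$ and obtain $B_U^* x=0$; form $y=\begin{pmatrix}x\\0\end{pmatrix}$ so that $U^* y=0$; invoke $N(U^*)\subseteq N(V^*)$ to conclude $V^* y=0$, yielding $A_V^* x=0$ and $B_V^* x=0$, and hence $L_V^* x=0$.

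The main obstacle is bookkeeping. The hypotheses in this complementary statement associate the ``$V$-side'' range conditions with the first inclusion and the ``$U$-side'' with the second — the opposite pattern from the non-complementary group inverse theorem and, more pointedly, the opposite of the Moore-Penrose version, because the range inclusions equivalent to the absorption property in Theorem~\ref{gpal} point the other way from those in Theorem~\ref{mpal}. The only recurring subtlety beyond this is the dictionary between range conditions phrased via $M^*$ and via $M^\#$, mediated by $(M^*)^\#=(M^\#)^*$. Once that translation is fixed, no calculation beyond the elementary group inverse identities already invoked in the preceding proof is needed.
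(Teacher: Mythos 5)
Your proof is correct and is exactly the argument the paper intends: the paper omits this proof as ``similar'' to the preceding $(K_U,K_V)$ theorem, and your null-space chase ($N(L_V)\subseteq N(L_U)$ via the $V$-side hypotheses, $N(L_U^*)\subseteq N(L_V^*)$ via the $U$-side ones, each routed through $N(V)\subseteq N(U)$ and $N(U^*)\subseteq N(V^*)$) is precisely that mirrored proof. One immaterial quibble: the pattern of attaching the $V$-side conditions to the first inclusion is in fact the \emph{same} as in the non-complementary group inverse theorem (it is only the Moore--Penrose versions that reverse it), but this affects nothing in your argument.
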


\section{Inheritance by Pseudo Principal Pivot Transform}
In this last section, we discuss inheritance of the absorption law by the pseudo principal pivot transforms. First we prove results for principal subblocks.

\subsection{The Case of the Moore-Penrose Inverse}

\begin{thm}\label{prinsub1}
Let $U$ and $V$ be as above. Suppose that $R(B_{V})\subseteq R(A_{V})$ and $R(C_{U}^*)\subseteq R(A_{U}^*)$. If $(U,V)$ is a Moore-Penrose inverse absorbing pair then $(A_{U},A_{V})$ is also a Moore-Penrose inverse absorbing pair.
\end{thm}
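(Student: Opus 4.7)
The plan is to mimic the null-space style arguments used in the proof of Theorem~\ref{abs_schur} but applied to the leading principal subblocks. By Theorem~\ref{mpal}, the hypothesis that $(U,V)$ is a Moore-Penrose inverse absorbing pair is equivalent to $R(U)\subseteq R(V)$ and $R(V^*)\subseteq R(U^*)$, which (taking orthogonal complements) is equivalent to $N(V^*)\subseteq N(U^*)$ and $N(U)\subseteq N(V)$. Similarly, our target conclusion is, by Theorem~\ref{mpal}, equivalent to showing $N(A_U)\subseteq N(A_V)$ and $N(A_V^*)\subseteq N(A_U^*)$. So I would reduce the whole problem to verifying these two null-space inclusions.

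For $N(A_U)\subseteq N(A_V)$, I would pick $x\in N(A_U)$, i.e.\ $A_U x=0$. The hypothesis $R(C_U^*)\subseteq R(A_U^*)$ is equivalent to $N(A_U)\subseteq N(C_U)$, so we also get $C_U x=0$. Now embed $x$ into $z=\begin{pmatrix}x\\0\end{pmatrix}$. A direct block computation gives $Uz=\begin{pmatrix}A_U x\\ C_U x\end{pmatrix}=0$, so $z\in N(U)$. Using $N(U)\subseteq N(V)$ from absorption, we get $Vz=0$, which unpacks into $A_V x=0$ (and $C_V x=0$, but only the first is needed). This yields $x\in N(A_V)$.

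For $N(A_V^*)\subseteq N(A_U^*)$, the argument is symmetric but uses the other pair of hypotheses. Take $x\in N(A_V^*)$. The hypothesis $R(B_V)\subseteq R(A_V)$ is equivalent to $N(A_V^*)\subseteq N(B_V^*)$, hence $B_V^* x=0$. Setting $y=\begin{pmatrix}x\\0\end{pmatrix}$ and computing $V^*y$ block-wise gives $V^*y=\begin{pmatrix}A_V^* x\\ B_V^* x\end{pmatrix}=0$, so $y\in N(V^*)$. From absorption we have $N(V^*)\subseteq N(U^*)$, which forces $U^*y=0$ and therefore $A_U^* x=0$, i.e.\ $x\in N(A_U^*)$.

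There is no serious obstacle here: the two side-hypotheses $R(B_V)\subseteq R(A_V)$ and $R(C_U^*)\subseteq R(A_U^*)$ exist precisely so that when one embeds a null vector of $A_U$ (respectively $A_V^*$) with a zero in the second block, the corresponding vector lies in $N(U)$ (respectively $N(V^*)$), allowing the absorption inclusions to transfer through. The only thing one needs to be careful about is matching the direction of each inclusion with the correct half of the absorption equivalence, which is why I would state both range inclusions as null-space inclusions at the outset. Finally, apply Theorem~\ref{mpal} in the reverse direction to conclude that $(A_U,A_V)$ is a Moore-Penrose inverse absorbing pair.
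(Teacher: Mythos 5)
Your proposal is correct and follows essentially the same route as the paper's own proof: reduce the absorption property to the two null-space inclusions $N(A_U)\subseteq N(A_V)$ and $N(A_V^*)\subseteq N(A_U^*)$ via Theorem~\ref{mpal}, embed null vectors as block vectors $\begin{pmatrix}x\\0\end{pmatrix}$ using the side hypotheses to kill the off-diagonal blocks, and transfer through $N(U)\subseteq N(V)$ and $N(V^*)\subseteq N(U^*)$. No differences worth noting.
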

\begin{proof}
Let $x\in N(A_{U})$. Then $x\in N(C_{U})$. Set $z=\begin{pmatrix}
x\\
0
\end{pmatrix}$. Then 
\begin{center}
$Uz=\begin{pmatrix}
A_{U} & B_{U}\\
C_{U} & D_{U}
\end{pmatrix}
\begin{pmatrix}
x\\
0
\end{pmatrix}=
\begin{pmatrix}
A_{U}x\\
C_{U}x
\end{pmatrix}=0$.
\end{center}
Thus $z\in N(U)$. Since $(U,V)$ is a Moore-Penrose inverse absorbing pair, we then have $z\in N(V)$. So, \begin{center}
$0=Vz=\begin{pmatrix}
A_{V} & B_{V}\\
C_{V} & D_{V}
\end{pmatrix}\begin{pmatrix}
x\\
0
\end{pmatrix}$. 
\end{center}
Then $A_{V}x=0$ and hence $N(A_{U})\subseteq N(A_{V})$, i.e., $R(A_{V}^*)\subseteq R(A_{U}^*)$. Next, we prove that $R(A_{U})\subseteq R(A_{V})$. We prove $N(A_{V}^*)\subseteq N(A_{U}^*)$, instead. Let $x\in N(A_{V}^*)$. Then $x\in N(B_{V}^*)$ as $N(A_{V}^*)\subseteq N(B_{V}^*)$. Set $z=\begin{pmatrix}
x\\
0
\end{pmatrix}$. We then have 
\begin{center}
$V^*z=\begin{pmatrix}
A_{V}^* & C_{V}^*\\
B_{V}^* & D_{V}^*
\end{pmatrix}
\begin{pmatrix}
x\\
0
\end{pmatrix}=
\begin{pmatrix}
A_{V}^*x\\
B_{V}^*x
\end{pmatrix}$.
\end{center}
Thus $z\in N(V^*)$ so that $z\in N(U^*)$. So, 
\begin{center}
$0=U^*z=\begin{pmatrix}
A_{U}^* & C_{U}^*\\
B_{U}^* & D_{U}^*
\end{pmatrix}
\begin{pmatrix}
x\\
0
\end{pmatrix}=
\begin{pmatrix}
A_{U}^*x\\
B_{U}^*x
\end{pmatrix}$.
\end{center}
So $x\in N(A_{U}^*)$ and hence $N(A_{V}^*)\subseteq N(A_{U}^*)$, proving that $(A_{U},A_{V})$ is a Moore-Penrose inverse absorbing pair.
\end{proof}

The proof of the next result is similar and is hence omitted. This concerns the inheritance of the second principal diagonal subblock.

\begin{thm}\label{prinsub2}
Let $U$ and $V$ be as above. Suppose that $R(C_{V})\subseteq R(D_{V})$ and $R(B_{U}^*)\subseteq R(D_{U}^*)$. If $(U,V)$ is a Moore-Penrose inverse absorbing pair then $(D_{U},D_{V})$ is also a Moore-Penrose inverse absorbing pair.
\end{thm}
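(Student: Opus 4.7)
The plan is to mirror the proof of Theorem \ref{prinsub1} almost verbatim, with the bottom-right block $D$ playing the role that $A$ played there. By Theorem \ref{mpal}, proving that $(D_U, D_V)$ is a Moore-Penrose inverse absorbing pair amounts to showing the two null-space inclusions $N(D_U) \subseteq N(D_V)$ and $N(D_V^*) \subseteq N(D_U^*)$. The hypothesis that $(U,V)$ is itself a Moore-Penrose inverse absorbing pair supplies the analogous inclusions $N(U) \subseteq N(V)$ and $N(V^*) \subseteq N(U^*)$, again by Theorem \ref{mpal}. The natural mechanism for transferring information from the second diagonal block up to the full matrix is the bottom-embedding $z = \begin{pmatrix} 0 \\ x \end{pmatrix}$ (rather than the top-embedding $\begin{pmatrix} x \\ 0 \end{pmatrix}$ used in Theorem \ref{prinsub1}).

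For the first inclusion, I would take $x \in N(D_U)$ and use that the hypothesis $R(B_U^*) \subseteq R(D_U^*)$ is equivalent to $N(D_U) \subseteq N(B_U)$, forcing $B_U x = 0$ as well. Then with $z = \begin{pmatrix} 0 \\ x \end{pmatrix}$ we obtain $Uz = \begin{pmatrix} B_U x \\ D_U x \end{pmatrix} = 0$, so $z \in N(U) \subseteq N(V)$, and the bottom component of $Vz$ yields $D_V x = 0$. For the second inclusion, I would take $x \in N(D_V^*)$ and invoke $R(C_V) \subseteq R(D_V)$, which is equivalent to $N(D_V^*) \subseteq N(C_V^*)$, to get $C_V^* x = 0$ in addition to $D_V^* x = 0$. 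With the same $z$, we compute $V^* z = 0$, hence $z \in N(V^*) \subseteq N(U^*)$, and the bottom component of $U^* z$ gives $D_U^* x = 0$. A direct appeal to Theorem \ref{mpal} then closes the argument.

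There is essentially no serious obstacle: the result is the exact dual of Theorem \ref{prinsub1}, obtained by reading the block matrix from the bottom up rather than the top down, and the only substantive choice is the embedding $z = \begin{pmatrix} 0 \\ x \end{pmatrix}$. Consistently with this duality, the off-diagonal block hypotheses swap sides in the statement: the condition on $B$ now sits on $U$ (to accompany $D_U$) and the condition on $C$ now sits on $V$ (to accompany $D_V$), which is precisely the pattern one should expect from the symmetry of the argument.
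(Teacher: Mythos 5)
Your proposal is correct and is precisely the argument the paper intends (the paper omits this proof, noting only that it is similar to that of Theorem \ref{prinsub1}): the bottom-embedding $z=\begin{pmatrix}0\\ x\end{pmatrix}$ together with the equivalences $R(B_U^*)\subseteq R(D_U^*)\iff N(D_U)\subseteq N(B_U)$ and $R(C_V)\subseteq R(D_V)\iff N(D_V^*)\subseteq N(C_V^*)$ yields $N(D_U)\subseteq N(D_V)$ and $N(D_V^*)\subseteq N(D_U^*)$, and Theorem \ref{mpal} finishes the job. Your observation about why the off-diagonal hypotheses swap between $U$ and $V$ relative to Theorem \ref{prinsub1} is also accurate.
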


Now, we turn our attention to inheritance by the pseudo principal pivot transform.

\begin{thm}\label{inherpppt}
Let $U$ and $V$ be as above. Suppose that $ R(C_{U}^*)\subseteq R(A_{U}^*)$, $R(B_{U})\subseteq R(A_{U})$, $R(B_{U}^*)\subseteq R(D_{U}^*)$, $R(C_{U})\subseteq R(D_{U})$, $R(C_{V}^*)\subseteq R(A_{V}^*)$, $R(B_{V})\subseteq R(A_{V})$, $R(B_{V}^*)\subseteq R(D_{V}^*)$ and $R(C_{V})\subseteq R(D_{V})$. \\
$(i)$ Suppose that $A_{U}$ and $A_{V}$ are range symmetric. If $(U,V)$ is a Moore-Penrose inverse absorbing pair then $(H_{U},H_{V})$ is also a Moore-Penrose inverse absorbing pair, where 
\begin{center}
$H_{U}=pppt(U,A_{U})_{\dagger}=\begin{pmatrix}
A_{U}^\dagger & -A_{U}^\dagger B_{U}\\
C_{U}A_{U}^\dagger & F_{U}
\end{pmatrix}$ 
\end{center}
and 
\begin{center}
$H_{V}=pppt(V,A_{V})_{\dagger}=\begin{pmatrix}
A_{V}^\dagger & -A_{V}^\dagger B_{V}\\
C_{V}A_{V}^\dagger & F_{V}
\end{pmatrix}$.
\end{center}
$(ii)$ Suppose that $D_{U}$ and $D_{V}$ are range symmetric. If $(U,V)$ is a Moore-Penrose inverse absorbing pair then $(J_{U},J_{V})$ is also a Moore-Penrose inverse absorbing pair, where 
\begin{center}
$J_{U}=pppt(U,D_{U})_{\dagger}=\begin{pmatrix}
G_{U} & B_{U}D_{U}^\dagger \\
-D_{U}^\dagger C_{U} & D_{U}^\dagger
\end{pmatrix}$
\end{center}
and 
\begin{center}
$J_{V}=pppt(V,D_{V})_{\dagger}=\begin{pmatrix}
G_{V} & B_{V}D_{V}^\dagger \\
-D_{V}^\dagger C_{V} & D_{V}^\dagger
\end{pmatrix}$.
\end{center}
\end{thm}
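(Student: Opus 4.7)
The plan is to verify, via Theorem~\ref{mpal}, the pair of range inclusions $R(H_U)\subseteq R(H_V)$ and $R(H_V^*)\subseteq R(H_U^*)$ (part (ii) will be carried out by the mirror argument). The key identity is the block factorization
\[
H_U=\begin{pmatrix} I & 0 \\ C_U & I \end{pmatrix}\begin{pmatrix} A_U^\dagger & 0 \\ 0 & D_U \end{pmatrix}\begin{pmatrix} I & -B_U \\ 0 & I \end{pmatrix},
\]
which holds by direct block multiplication, with no hypothesis invoked. The outer unit-triangular factors are invertible, so one reads off
\[
R(H_U)=\{(u,\,C_Uu+w):u\in R(A_U^\dagger),\,w\in R(D_U)\},
\]
and the range symmetry of $A_U$ (which gives $R(A_U^\dagger)=R(A_U^*)=R(A_U)$) reduces this to $\{(u,\,C_Uu+w):u\in R(A_U),\,w\in R(D_U)\}$; the analogous formula holds for $R(H_V)$.

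Next I would feed in the inheritance already proved for the principal diagonal blocks. Theorem~\ref{prinsub1} applies because $R(B_V)\subseteq R(A_V)$ and $R(C_U^*)\subseteq R(A_U^*)$ are in force, and range symmetry of $A_U$ and $A_V$ upgrades its conclusion to the equality $R(A_U)=R(A_V)$. Theorem~\ref{prinsub2} applies because $R(C_V)\subseteq R(D_V)$ and $R(B_U^*)\subseteq R(D_U^*)$ hold, yielding $R(D_U)\subseteq R(D_V)$ and dually $R(D_V^*)\subseteq R(D_U^*)$. Given $(u,\,C_Uu+w)\in R(H_U)$, one has $u\in R(A_U)=R(A_V)$, and one rewrites $C_Uu+w=C_Vu+w'$ with $w':=(C_U-C_V)u+w$; each of $C_Uu\in R(C_U)\subseteq R(D_U)\subseteq R(D_V)$, $C_Vu\in R(C_V)\subseteq R(D_V)$, and $w\in R(D_U)\subseteq R(D_V)$ lies in $R(D_V)$, so $w'\in R(D_V)$ and the point lies in $R(H_V)$. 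Taking the conjugate transpose of the factorization gives
\[
R(H_U^*)=\{(u,\,-B_U^*u+w):u\in R(A_U^*),\,w\in R(D_U^*)\},
\]
and the dual inclusion $R(H_V^*)\subseteq R(H_U^*)$ follows by the symmetric bookkeeping, using $R(A_V^*)=R(A_U^*)$, $R(D_V^*)\subseteq R(D_U^*)$, together with the hypotheses $R(B_U^*)\subseteq R(D_U^*)$ and $R(B_V^*)\subseteq R(D_V^*)$.

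Part (ii) proceeds by the mirror factorization
\[
J_U=\begin{pmatrix} I & B_U \\ 0 & I \end{pmatrix}\begin{pmatrix} A_U & 0 \\ 0 & D_U^\dagger \end{pmatrix}\begin{pmatrix} I & 0 \\ -C_U & I \end{pmatrix},
\]
which, after range symmetry of $D_U$ and $D_V$ is used to identify $R(D_U^\dagger)=R(D_U)$, yields $R(J_U)=\{(u+B_Uw,\,w):u\in R(A_U),\,w\in R(D_U)\}$ and the analogous formula for $R(J_V)$. The roles of the principal blocks and of the off-diagonal blocks are interchanged, and the verification is driven by $R(D_U)=R(D_V)$ (from range symmetry plus Theorem~\ref{prinsub2}), $R(A_U)\subseteq R(A_V)$ and $R(A_V^*)\subseteq R(A_U^*)$ from Theorem~\ref{prinsub1}, together with the containments $R(B_U)\subseteq R(A_U)$, $R(B_V)\subseteq R(A_V)$ (and the transposed version $R(C_\bullet^*)\subseteq R(A_\bullet^*)$) supplied by the hypotheses. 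The only real obstacle I anticipate is the bookkeeping: matching each containment in the chain to the precise hypothesis or previously proved theorem that supplies it. Once the correct factorization of $H_U$ (respectively $J_U$) is in hand, the remainder of the argument is essentially mechanical.
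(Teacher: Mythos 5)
Your proposal is correct, and its overall skeleton coincides with the paper's: reduce the claim via Theorem~\ref{mpal} to the two range inclusions $R(H_U)\subseteq R(H_V)$ and $R(H_V^*)\subseteq R(H_U^*)$, describe these ranges blockwise, and then invoke Theorems~\ref{prinsub1} and \ref{prinsub2} for the inheritance at the level of the diagonal blocks. Where you genuinely diverge is in the key technical step of identifying $R(H_U)$. The paper imports the explicit formula $H_U^\dagger=\begin{pmatrix} G_U & B_UD_U^\dagger\\ -D_U^\dagger C_U & D_U^\dagger\end{pmatrix}$ from Theorem 3.1 of \cite{kavitaPseudoschur2015} (a nontrivial external result whose validity rests on the same four range hypotheses per matrix), computes $H_UH_U^\dagger=\begin{pmatrix} A_U^\dagger A_U & 0\\ 0 & D_UD_U^\dagger\end{pmatrix}$, and so reads off $R(H_U)$ as the clean direct sum $R(A_U^*)\oplus R(D_U)$; membership in $R(H_V)$ is then verified by showing $H_VH_V^\dagger x=x$. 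You instead use the hypothesis-free block triangular factorization $H_U=\bigl(\begin{smallmatrix} I & 0\\ C_U & I\end{smallmatrix}\bigr)\bigl(\begin{smallmatrix} A_U^\dagger & 0\\ 0 & D_U\end{smallmatrix}\bigr)\bigl(\begin{smallmatrix} I & -B_U\\ 0 & I\end{smallmatrix}\bigr)$, which yields the sheared description $\{(u,\,C_Uu+w)\}$ and pushes the range hypotheses ($R(C_\bullet)\subseteq R(D_\bullet)$, $R(B_\bullet^*)\subseteq R(D_\bullet^*)$, etc.) into the elementary bookkeeping of absorbing the cross terms $C_Uu-C_Vu$ and $B_U^*u-B_V^*u$; I checked that each containment you need is indeed among the stated hypotheses or supplied by Theorems~\ref{prinsub1}--\ref{prinsub2}, and the same holds for your mirror factorization of $J_U$ in part (ii). What your route buys is self-containedness and transparency --- no appeal to the cited pseudoinverse formula, and the argument for part (ii) is genuinely symmetric rather than ``omitted as similar''; what the paper's route buys is a cleaner (unsheared) description of the ranges and a proof that sits naturally alongside its other uses of the $pppt$/$cpppt$ duality.
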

\begin{proof}
$(i)$ Since $(U,V)$ is a Moore-Penrose inverse absorbing pair, from Theorem \ref{mpal}, we have $R(U)\subseteq R(V)$ and $R(V^*)\subseteq R(U^*)$. First, we show that $R(H_{U})\subseteq R(H_{V})$. Let $x=\begin{pmatrix}
x^{1}\\
x^{2}
\end{pmatrix}$ such that $x\in R(H_{U})$. As $R(H_{U})=R(H_{U}H_{U}^\dagger)$, we then have $x \in R(H_{U}H_{U}^\dagger)$. From (Theorem 3.1) \cite{kavitaPseudoschur2015}, 
\begin{center}
$H_{U}^\dagger=\begin{pmatrix}
G_{U} & B_{U}D_{U}^\dagger\\
-D_{U}^\dagger C_{U} & D_{U}^\dagger
\end{pmatrix}$
\end{center}
and so 
\begin{center}
$H_{U}H_{U}^\dagger=\begin{pmatrix}
A_{U}^\dagger A_{U} & 0\\
0 & D_{U}D_{U}^\dagger
\end{pmatrix}$. 
\end{center}
Thus $x^{1}\in R(A_{U}^\dagger A_{U})=R(A_{U}^\dagger)$. Since $A_{U}$ is range symmetric, we then have $x^{1}\in R(A_{U}^\dagger)=R(A_{U}^*)=R(A_{U})$ and $x^{2}\in R(D_{U}D_{U}^\dagger)=R(D_{U})$. From Theorem \ref{prinsub1} and Theorem \ref{prinsub2}, both the pairs $(A_{U},A_{V})$ and $(D_{U},D_{V})$ are Moore-Penrose inverse absorbing and hence $R(A_{U})\subseteq R(A_{V})$ and $R(D_{U})\subseteq R(D_{V})$. Finally, we have $x^{1}\in R(A_{V})=R(A_{V}^*)=R(A_{V}^\dagger A_{V})$ and $x^{2}\in R(D_{V})=R(D_{V}D_{V}^\dagger)$. Since 
\begin{center}
$H_{V}^\dagger=\begin{pmatrix}
G_{V} & B_{V}D_{V}^\dagger\\
-D_{V}^\dagger C_{V} & D_{V}^\dagger
\end{pmatrix}$,
\end{center}
one has 
\begin{center}
$H_{V}H_{V}^\dagger x=\begin{pmatrix}
A_{V}^\dagger A_{V}x^{1} & 0\\
0 & D_{V}D_{V}^\dagger x^{2}
\end{pmatrix}=\begin{pmatrix}
x^{1}\\
x^{2}
\end{pmatrix}=x$.
\end{center}
So $x\in R(H_{V})$. Thus $R(H_{U})\subseteq R(H_{V})$. Next, we prove that $R(H_{V}^*)\subseteq R(H_{U}^*)$. Let $y=\begin{pmatrix} y^{1}\\ y^{2}
\end{pmatrix}$ with $y\in R(H_{V}^*)$. Using the fact that $R(H_{V}^*)=R(H_{V}^\dagger)=R(H_{V}^\dagger H_{V})$, we then have $y\in R(H_{V}^\dagger H_{V})$. By using the expression for $H_{V}^\dagger H_{V}$, it follows that $y^{1}\in R(A_{V}A_{V}^\dagger)=R(A_{V})$. Since $A_{V}$ is range symmetric, we then have $y^{1}\in R(A_{V}^*)$ and $y^{2}\in R(D_{V}^\dagger D_{V})=R(D_{V}^*)$. Since $(A_{U},A_{V})$ is a Moore-Penrose absorbing pair, we have $R(A_{V}^*)\subseteq R(A_{U}^*)$ and hence 
\begin{center}
$y^{1}\in R(A_{U}^*)=R(A_{U})=R(A_{U}A_{U}^\dagger)$.
\end{center}

Again, $(D_{U},D_{V})$ is a Moore-Penrose absorbing pair, we have $R(D_{V}^*)\subseteq R(D_{U}^*)$, so $y^{2}\in R(D_{U}^*)=R(D_{U}^\dagger)=R(D_{U}^\dagger D_{U})$. Since
\begin{center}
$ H_{U}^\dagger H_{U}=\begin{pmatrix}
A_{U} A_{U}^\dagger & 0\\
0 & D_{U}^\dagger D_{U}
\end{pmatrix}$,
\end{center}
one has $y\in R(H_{U}^\dagger H_{U})=R(H_{U}^*)$, proving that  $R(H_{V}^*)\subseteq R(H_{U}^*)$. So $(H_{U},H_{V})$ is a Moore-Penrose inverse absorbing pair.\\
$(ii)$ The proof of this part is similar to part $(i)$.
\end{proof}

Next, we give an example to illustrate Theorem $\ref{inherpppt}$.

\begin{ex}
Let $A_{U}=\begin{pmatrix}
\frac{1}{2} & \frac{1}{2}\\
-2 & 1
\end{pmatrix}$, $B_{U}=\begin{pmatrix}
0\\
0
\end{pmatrix}$, $C_{U}=\begin{pmatrix}
0 & 0
\end{pmatrix}$ and $D_{U}=(0)$. Then 
\begin{center}
$U=\begin{pmatrix}
A_{U} & B_{U}\\
C_{U} & D_{U}
\end{pmatrix}
=\begin{pmatrix}
\frac{1}{2} & \frac{1}{2} & 0\\
-2 & 1 & 0\\
0 & 0 &0
\end{pmatrix}$.
\end{center}
Consider, $A_{V}=\begin{pmatrix}
2 & 1\\
2 & 2
\end{pmatrix}$, $B_{V}=\begin{pmatrix}
0\\
0
\end{pmatrix}$, $C_{V}=\begin{pmatrix}
0 & 0
\end{pmatrix}$ and $D_{V}=(0)$. Then \begin{center}
$V=\begin{pmatrix}
A_{V} & B_{V}\\
C_{V} & D_{V}
\end{pmatrix}=\begin{pmatrix}
2 & 1 & 0\\
2 & 2 & 0\\
0 & 0 & 0
\end{pmatrix}$.
\end{center}
 Clearly, all the conditions in Theorem \ref{inherpppt} are satisfied. Also, $R(U)\subseteq R(V)$ and $R(V^*)\subseteq R(U^*)$. Thus $(U,V)$ is a Moore-Penrose absorbing pair. Now,
\begin{center}
$H_{U}=\begin{pmatrix}
\frac{2}{3} & \frac{1}{3} & 0\\
\frac{4}{3} & \frac{1}{3} & 0\\
0 & 0 & 0
\end{pmatrix}$
\end{center}
 and \begin{center}
$H_{V}=\begin{pmatrix}
1 & -\frac{1}{2} & 0\\
-1 & 1 & 0\\
0 &0 &0
\end{pmatrix}$.
\end{center} 
Then $(H_{U},H_{V})$ is a Moore-Penrose absorbing pair as $R(H_{U})\subseteq R(H_{V})$ and $R(H_{V}^*)\subseteq R(H_{U}^*)$.
\end{ex}

\subsection{The Group Inverse Version}
In this subsection, we discuss the inheritance of the absorption law by the pseudo principal pivot transforms for group invertible matrices.
\begin{thm}\label{prinsub1g}
Let $U$ and $V$ be as above. Suppose that $R(B_{U})\subseteq R(A_{U})$ and $R(C_{V}^*)\subseteq R(A_{V}^*)$. If $(U,V)$ is a group inverse absorbing pair then $(A_{U},A_{V})$ is also a group inverse absorbing pair.
\end{thm}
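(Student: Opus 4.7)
The plan is to mirror the proof of Theorem \ref{prinsub1} almost verbatim, but taking care that Theorem \ref{gpal} reverses the inclusion directions compared to the Moore--Penrose characterization in Theorem \ref{mpal}. So whereas in Theorem \ref{prinsub1} one used $R(U)\subseteq R(V)$ and $R(V^*)\subseteq R(U^*)$, here the hypothesis that $(U,V)$ is a group inverse absorbing pair gives (by Theorem \ref{gpal}) the \emph{opposite} inclusions $R(V)\subseteq R(U)$ and $R(U^*)\subseteq R(V^*)$, i.e.\ $N(U)\subseteq N(V)$... wait, more precisely $N(V)\subseteq N(U)$ and $N(U^*)\subseteq N(V^*)$. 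Correspondingly, the goal is to verify $R(A_U^*)\subseteq R(A_V^*)$ and $R(A_V)\subseteq R(A_U)$, which dualise to $N(A_V)\subseteq N(A_U)$ and $N(A_U^*)\subseteq N(A_V^*)$. This also explains why the hypotheses on the off-diagonal blocks are swapped relative to Theorem \ref{prinsub1}: we want $R(B_U)\subseteq R(A_U)$ (so that $N(A_U^*)\subseteq N(B_U^*)$ can be exploited on the $U$-side) and $R(C_V^*)\subseteq R(A_V^*)$ (so that $N(A_V)\subseteq N(C_V)$ can be exploited on the $V$-side).

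For the first inclusion, I would start with $x\in N(A_V)$. From $R(C_V^*)\subseteq R(A_V^*)$, i.e.\ $N(A_V)\subseteq N(C_V)$, we get $C_V x=0$. Setting $z=\begin{pmatrix} x\\ 0\end{pmatrix}$ gives $Vz=\begin{pmatrix} A_V x\\ C_V x\end{pmatrix}=0$, so $z\in N(V)$. Since $N(V)\subseteq N(U)$, we have $z\in N(U)$, which forces $A_U x=0$, proving $N(A_V)\subseteq N(A_U)$.

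For the second inclusion, I would begin with $x\in N(A_U^*)$. The hypothesis $R(B_U)\subseteq R(A_U)$ is equivalent to $N(A_U^*)\subseteq N(B_U^*)$, so $B_U^* x=0$. Setting $z=\begin{pmatrix} x\\ 0\end{pmatrix}$, one has $U^*z=\begin{pmatrix} A_U^* x\\ B_U^* x\end{pmatrix}=0$, so $z\in N(U^*)$. Then $N(U^*)\subseteq N(V^*)$ gives $z\in N(V^*)$, and reading off the first block yields $A_V^* x=0$. Thus $N(A_U^*)\subseteq N(A_V^*)$, equivalently $R(A_V)\subseteq R(A_U)$, and Theorem \ref{gpal} now concludes that $(A_U,A_V)$ is a group inverse absorbing pair.

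There is no genuine obstacle: the argument is almost a one-to-one translation of the proof of Theorem \ref{prinsub1}, and no properties specific to the Moore--Penrose inverse (such as the four Penrose equations or symmetry of $AA^\dagger$) are used in that proof—only the range/nullspace characterisation of absorbing pairs and standard range-inclusion dualities. The only thing to be careful about is keeping straight which role $U$ and $V$ play in each inclusion, since the direction of the range inclusions in the characterization flips when passing from $\dagger$ to $\#$.
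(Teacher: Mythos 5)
Your proof is correct and follows essentially the same route as the paper: both arguments reduce the claim via Theorem \ref{gpal} to the two null-space inclusions $N(A_V)\subseteq N(A_U)$ and $N(A_U^*)\subseteq N(A_V^*)$, and establish each by padding a vector with a zero block and using $N(V)\subseteq N(U)$ and $N(U^*)\subseteq N(V^*)$ together with the hypotheses $R(C_V^*)\subseteq R(A_V^*)$ and $R(B_U)\subseteq R(A_U)$. Your explicit bookkeeping of how the inclusion directions flip relative to Theorem \ref{prinsub1} is accurate and matches what the paper does implicitly.
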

\begin{proof}
Let $x\in N(A_{V})$. Then $x\in N(C_{V})$. Set $z=\begin{pmatrix}
x\\
0
\end{pmatrix}$. Then 
\begin{center}
$Vz=\begin{pmatrix}
A_{V} & B_{V}\\
C_{V} & D_{V}
\end{pmatrix}
\begin{pmatrix}
x\\
0
\end{pmatrix}=
\begin{pmatrix}
A_{V}x\\
C_{V}x
\end{pmatrix}=0$.
\end{center}
Thus $z\in N(V)$. Since $(U,V)$ is a group inverse absorbing pair, we then have $z\in N(U)$. So, \begin{center}
$0=Uz=\begin{pmatrix}
A_{U} & B_{U}\\
C_{U} & D_{U}
\end{pmatrix}\begin{pmatrix}
x\\
0
\end{pmatrix}$. 
\end{center}
Then $A_{U}x=0$ and hence $N(A_{V})\subseteq N(A_{U})$, i.e., $R(A_{U}^*)\subseteq R(A_{V}^*)$. Next, we prove that $R(A_{V})\subseteq R(A_{U})$. We prove $N(A_{U}^*)\subseteq N(A_{V}^*)$, instead. Let $x\in N(A_{U}^*)$. Then $x\in N(B_{U}^*)$ as $N(A_{U}^*)\subseteq N(B_{U}^*)$. Set $z=\begin{pmatrix}
x\\
0
\end{pmatrix}$. We then have 
\begin{center}
$U^*z=\begin{pmatrix}
A_{U}^* & C_{U}^*\\
B_{U}^* & D_{U}^*
\end{pmatrix}
\begin{pmatrix}
x\\
0
\end{pmatrix}=
\begin{pmatrix}
A_{U}^*x\\
B_{U}^*x
\end{pmatrix}$.
\end{center}
Thus $z\in N(U^*)$ so that $z\in N(V^*)$. So, 
\begin{center}
$0=V^*z=\begin{pmatrix}
A_{V}^* & C_{V}^*\\
B_{V}^* & D_{V}^*
\end{pmatrix}
\begin{pmatrix}
x\\
0
\end{pmatrix}=
\begin{pmatrix}
A_{V}^*x\\
B_{V}^*x
\end{pmatrix}$.
\end{center}
So $x\in N(A_{V}^*)$ and hence $N(A_{U}^*)\subseteq N(A_{V}^*)$, proving that $(A_{U},A_{V})$ is a group inverse absorbing pair.
\end{proof}

The next result concerns the inheritance of the second principal diagonal subblock. The proof is similar to the proof of the Theorem $\ref{prinsub1g}$ and is omitted.

\begin{thm}\label{prinsub2g}
Let $U$ and $V$ be as above. Suppose that $R(C_{U})\subseteq R(D_{U})$ and $R(B_{V}^*)\subseteq R(D_{V}^*)$. If $(U,V)$ is a group inverse absorbing pair then $(D_{U},D_{V})$ is also a group inverse absorbing pair.
\end{thm}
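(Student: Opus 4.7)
The plan is to adapt the proof of Theorem \ref{prinsub1g} symmetrically: instead of the test vector $\binom{x}{0}$ used to extract information about the leading principal block $A$, one uses the vector $\binom{0}{x}$, so that only the second block row (or column) of $U$ and $V$ is exercised and the information funnels into $D_U$ and $D_V$. By Theorem \ref{gpal}, the hypothesis on $(U,V)$ is equivalent to $R(V)\subseteq R(U)$ and $R(U^*)\subseteq R(V^*)$, and it suffices to derive $R(D_V)\subseteq R(D_U)$ and $R(D_U^*)\subseteq R(D_V^*)$, from which Theorem \ref{gpal} will then give that $(D_U,D_V)$ is a group inverse absorbing pair.

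For the inclusion $R(D_U^*)\subseteq R(D_V^*)$, I would instead show $N(D_V)\subseteq N(D_U)$. Given $x\in N(D_V)$, the hypothesis $R(B_V^*)\subseteq R(D_V^*)$ is equivalent to $N(D_V)\subseteq N(B_V)$, so $B_V x=0$. Setting $z=\binom{0}{x}$, both block components of $Vz$ vanish, so $z\in N(V)\subseteq N(U)$; reading off the bottom block of $Uz=0$ forces $D_U x=0$.

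For the reverse inclusion $R(D_V)\subseteq R(D_U)$, I would show $N(D_U^*)\subseteq N(D_V^*)$. Given $x\in N(D_U^*)$, the hypothesis $R(C_U)\subseteq R(D_U)$ translates to $N(D_U^*)\subseteq N(C_U^*)$, so $C_U^* x=0$. With $z=\binom{0}{x}$ we get $U^* z=0$, so $z\in N(U^*)\subseteq N(V^*)$; the bottom block component of $V^* z=0$ then gives $D_V^* x=0$.

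I do not anticipate any genuine obstacle — the argument is a line-by-line transcription of the one for Theorem \ref{prinsub1g}. The only point requiring care is bookkeeping about which of the four range hypotheses ($R(C_U)\subseteq R(D_U)$ versus $R(B_V^*)\subseteq R(D_V^*)$) is used on which side of the absorption: the inclusion that comes from $N(V)\subseteq N(U)$ uses the $V$-hypothesis $R(B_V^*)\subseteq R(D_V^*)$ (to kill $B_V x$), while the inclusion coming from $N(U^*)\subseteq N(V^*)$ uses the $U$-hypothesis $R(C_U)\subseteq R(D_U)$ (to kill $C_U^* x$).
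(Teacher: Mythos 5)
Your proposal is correct and is exactly the adaptation the paper intends: the paper omits this proof, stating it is similar to that of Theorem \ref{prinsub1g}, and your argument is precisely that proof with the test vector $\binom{0}{x}$ in place of $\binom{x}{0}$, with the hypotheses $R(B_V^*)\subseteq R(D_V^*)$ and $R(C_U)\subseteq R(D_U)$ correctly deployed on the respective sides. No gaps.
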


In the concluding result, we prove the inheritance by the pseudo principal pivot transform.

\begin{thm}\label{inherppptg}
Let $U$ and $V$ be as above such that $A_{U}^{\#}$, $A_{V}^{\#}$, $D_{U}^{\#}$ and $D_{V}^{\#}$ exist. Suppose that $ R(C_{V}^*)\subseteq R(A_{U}^*)$, $R(B_{V})\subseteq R(A_{V})$, $R(C_{V})\subseteq R(D_{V})$, $R(B_{V}^*)\subseteq R(D_{V}^*)$, $R(C_{U}^*)\subseteq R(A_{U}^*)$, $R(B_{U})\subseteq R(A_{U})$, $R(C_{U})\subseteq R(D_{U})$ and $R(B_{U}^*)\subseteq R(D_{U}^*)$.\\
$(i)$ If $(U,V)$ is a group inverse absorbing pair then $(S_{U},S_{V})$ is also a group inverse absorbing pair, where 
\begin{center}
$S_{U}=pppt(U,A_{U})_{\#}=\begin{pmatrix}
A_{U}^\# & -A_{U}^\# B_{U}\\
C_{U}A_{U}^\# & K_{U}
\end{pmatrix}$ 
\end{center}
and 
\begin{center}
$S_{V}=pppt(V,A_{V})_{\#}=\begin{pmatrix}
A_{V}^\# & -A_{V}^\# B_{V}\\
C_{V}A_{V}^\# & K_{V}
\end{pmatrix}$.
\end{center}

$(ii)$ If $(U,V)$ is a group inverse absorbing pair then $(T_{U},T_{V})$ is also a group inverse absorbing pair, where 
\begin{center}
$T_{U}=pppt(U,D_{U})_{\#}=\begin{pmatrix}
L_{U} & B_{U}D_{U}^\# \\
-D_{U}^\# C_{U} & D_{U}^\#
\end{pmatrix}$
\end{center}
and 
\begin{center}
$T_{V}=pppt(V,D_{V})_{\#}=\begin{pmatrix}
L_{V} & B_{V}D_{V}^\# \\
-D_{V}^\# C_{V} & D_{V}^\#
\end{pmatrix}$.
\end{center}
\end{thm}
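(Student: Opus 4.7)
The plan is to follow the proof of Theorem \ref{inherpppt} closely, with the auxiliary identity $H_U^\dagger = J_U$ (drawn there from \cite{kavitaPseudoschur2015}) replaced by the group inverse analog $S_U^\# = T_U$ (and $S_V^\# = T_V$), which I expect is available in \cite{kavi1} under the stated range inclusions. Granting that identity, a direct block multiplication gives
\begin{center}
$S_U S_U^\# = \begin{pmatrix} A_U^\# A_U & 0 \\ 0 & D_U D_U^\# \end{pmatrix}$,
\end{center}
the cross terms collapsing thanks to $C_U A_U^\# A_U = C_U$, $A_U A_U^\# B_U = B_U$, $D_U D_U^\# C_U = C_U$ and $B_U D_U^\# D_U = B_U$, each of which is a standard consequence of the hypothesized inclusions via the preliminaries. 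A pleasant simplification over the Moore-Penrose version is that the range-symmetry assumption required in Theorem \ref{inherpppt} becomes automatic here, since $R(B^\# B) = R(B)$ already holds for every group invertible $B$.

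With this in hand the argument splits into two range comparisons. Since $(U,V)$ is group absorbing, Theorem \ref{gpal} gives $R(V) \subseteq R(U)$ and $R(U^*) \subseteq R(V^*)$, and then Theorems \ref{prinsub1g} and \ref{prinsub2g} promote this to both $(A_U, A_V)$ and $(D_U, D_V)$ being group absorbing, so that $R(A_V) \subseteq R(A_U)$, $R(D_V) \subseteq R(D_U)$, $R(A_U^*) \subseteq R(A_V^*)$ and $R(D_U^*) \subseteq R(D_V^*)$. To prove $R(S_V) \subseteq R(S_U)$, I would take $x = (x^1, x^2)^T \in R(S_V) = R(S_V S_V^\#)$; the block-diagonal structure forces $x^1 \in R(A_V^\# A_V) = R(A_V)$ and $x^2 \in R(D_V D_V^\#) = R(D_V)$, so the previous inclusions place $x^1, x^2$ in $R(A_U^\# A_U)$ and $R(D_U D_U^\#)$ respectively, whence $S_U S_U^\# x = x$ and $x \in R(S_U)$.

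For the adjoint inclusion $R(S_U^*) \subseteq R(S_V^*)$, I plan to use the identity $R(A^*) = R((A A^\#)^*)$, which is valid for any group invertible $A$ because $N(A A^\#) = N(A^\#) = N(A)$ (using that $R(A)$ and $N(A)$ are complementary for group invertible matrices). This gives $R(S_U^*) = R((S_U S_U^\#)^*)$, and the block structure decouples once more, forcing any $y = (y^1, y^2)^T \in R(S_U^*)$ to satisfy $y^1 \in R((A_U^\# A_U)^*) = R(A_U^*) \subseteq R(A_V^*)$ and $y^2 \in R((D_U D_U^\#)^*) = R(D_U^*) \subseteq R(D_V^*)$; tracing back through the analogous decomposition for $V$ puts $y$ in $R((S_V S_V^\#)^*) = R(S_V^*)$. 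Theorem \ref{gpal} then completes part (i), and part (ii) is entirely analogous after exchanging the roles of $A$ and $D$ and replacing $S_U^\# = T_U$ by $T_U^\# = S_U$.

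The main obstacle I foresee is the verification (or invocation from \cite{kavi1}) of $S_U^\# = T_U$: this requires checking all four group-inverse defining equations for the $2 \times 2$ block matrix $T_U$ against $S_U$, and tracking carefully which of the eight range inclusions in the hypothesis is used at each step. Once that formula is in hand, everything else is essentially bookkeeping on block matrix ranges, and the extra care needed (relative to the Moore-Penrose proof) is only the small detour through $R(A^*) = R((AA^\#)^*)$ to compensate for the fact that $R(S^\#) = R(S) \ne R(S^*)$ in general.
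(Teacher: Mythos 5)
Your proposal is correct and follows essentially the same route as the paper: both rest on the identity $S^\# = pppt$-complement (quoted from \cite{kavi1}), the block-diagonal form of $SS^\#$, and Theorems \ref{prinsub1g}--\ref{prinsub2g} to transfer the absorbing property to the diagonal blocks. The only cosmetic difference is that the paper establishes the second inclusion by showing $N(S_V)\subseteq N(S_U)$ via $S^\# S$, whereas you work directly with $R(S_U^*)=R((S_US_U^\#)^*)$; these are orthogonal-complement restatements of one another, and your observation that the range-symmetry hypothesis of the Moore--Penrose version becomes superfluous because $R(A^\# A)=R(A)$ is exactly the point exploited in the paper.
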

\begin{proof}
$(i)$ Since $(U,V)$ is a group inverse absorbing pair, from Theorem \ref{gpal}, we have $R(V)\subseteq R(U)$ and $R(U^*)\subseteq R(V^*)$. First, we show that $R(S_{V})\subseteq R(S_{U})$. Let $x=\begin{pmatrix}
x^{1}\\
x^{2}
\end{pmatrix}$ such that $x\in R(S_{V})$. The block matrices of $S_{V}$ satisfy the conditions in (Theorem $2.1$) \cite{kavi1}, so $S_{V}^{\#}$ exists and is given by 
\begin{center}
$S_{V}^{\#}=\begin{pmatrix}
L_{V} & B_{V}D_{V}^{\#}\\
-D_{V}^{\#}C_{V} & D_{V}^{\#}
\end{pmatrix}$. 
\end{center}
Now $R(S_{V})=R(S_{V}S_{V}^\#)$ and so we have $x \in R(S_{V}S_{V}^\#)$, where
\begin{center}
$S_{V}S_{V}^{\#}=\begin{pmatrix}
A_{V}^{\#}A_{V} & 0\\
0 & D_{V}D_{V}^{\#}
\end{pmatrix}$. 
\end{center}
Thus 
\begin{center}
$x^{1}\in R(A_{V}^\# A_{V})=R(A_{V}^\#)=R(A_{V})$ and $x^{2}\in R(D_{V}D_{V}^\#)=R(D_{V})$.
\end{center}
From Theorem \ref{prinsub1g} and Theorem \ref{prinsub2g}, both the pairs $(A_{U},A_{V})$ and $(D_{U},D_{V})$ are group inverse absorbing. So $R(A_{V})\subseteq R(A_{U})$ and $R(D_{V})\subseteq R(D_{U})$. Finally, we have 
\begin{center}
$x^{1}\in R(A_{U})=R(A_{U}A_{U}^\#)=R(A_{U}^\# A_{U})$
\end{center}
and 
\begin{center}
$x^{2}\in R(D_{U})=R(D_{U}D_{U}^\#)$.
\end{center}
Again, the block matrices of $S_{U}$ satisfy the conditions in (Theorem $2.1$) \cite{kavi1}. So, $S_{U}^{\#}$ exists and is given by 
\begin{center}
$S_{U}^{\#}=\begin{pmatrix}
L_{U} & B_{U}D_{U}^\#\\
-D_{U}^\# C_{U} & D_{U}^\#
\end{pmatrix}$.
\end{center}
Thus, we have 
\begin{center}
$S_{U}S_{U}^\# x=\begin{pmatrix}
A_{U}^\# A_{U}x^{1} & 0\\
0 & D_{U}D_{U}^\# x^{2}
\end{pmatrix}=\begin{pmatrix}
x^{1}\\
x^{2}
\end{pmatrix}=x$,
\end{center}
So, it follows that $x\in R(S_{U})$. Thus $R(S_{V})\subseteq R(S_{U})$. Let $y\in N(S_{V})$. Then $y\in N(S_{V}^{\#}S_{V})$, where 
\begin{center}
$S_{V}^{\#}S_{V}=\begin{pmatrix}
A_{V}A_{V}^{\#} & 0\\
0 & D_{V}^{\#}D_{V}
\end{pmatrix}$. 
\end{center}
\begin{center}
$y^{1}\in N(A_{V}A_{V}^{\#})=N(A_{V}^{\#}A_{V})=N(A_{V})$
\end{center}
and 
\begin{center}
$y^{2}\in N(D_{V}^{\#}D_{V})=N(D_{V})$.
\end{center}
Since $N(A_{V})\subseteq N(A_{U})$, we then have 
\begin{center}
$y^{1}\in N(A_{U})=N(A_{U}^{\#}A_{U})=N(A_{U}A_{U}^\#)$.
\end{center}
We also have, $N(D_{V})\subseteq N(D_{U})$ so that 
\begin{center}
$y^{2}\in N(D_{U})=N(D_{U}^\# D_{U})$. 
\end{center}
Hence, 
\begin{center}
$S_{U}^\# S_{U}y=\begin{pmatrix}
A_{U}A_{U}^{\#}y^{1} & 0\\
0 & D_{U}^{\#}D_{U}y^{2}
\end{pmatrix}=0$.
\end{center}
So $y\in N(S_{U}^\# S_{U})=N(S_{U})$, proving that  $R(S_{U}^*)\subseteq R(S_{V}^*)$. So $(S_{U},S_{V})$ is a group inverse absorbing pair. 

$(ii)$ Similar to part $(i)$.
\end{proof}

\newpage


\begin{thebibliography}{}
\bibitem{albert1969conditions}
A. Albert, {\it Conditions for positive and nonnegative definiteness in terms of pseudo inverses}, SIAM J. Appl. Math., {\bf17}, (1969) 434-440.
\bibitem{ben2003generalized}
A. Ben-Israel, T.N.E. Greville, {\it Generalized inverses: Theory and Applications}, second ed., Wiley, New York, 1974.
\bibitem{kavitaPseudoschur2015}
K. Bisht, G. Ravindran and K.C. Sivakumar, {\it Pseudo Schur complements, pseudo principal pivot transforms and their inheritance properties}, Electron. J. Linear Algebra, {\bf 30}, (2015) 455-477.
\bibitem{kavi1}
Kavita Bisht and K.C. Sivakumar, {\it Pseudo principal pivot transform: The group inverse case}, arXiv:1605.01970.
\bibitem{chen2008absorption}
X. Chen, J. Zhang, W. Guo, {\it The apsorption law of \{1,3\}, \{1,4\} inverse of matrix}, J. Inn. Mong. Norm. Univ., {\bf 37}, (2008) 337-339.
\bibitem{damm2009cancellation}
T. Damm and H. K. Wimmer, {\it A cancellation property of the Moore-Penrose inverse of triple products}, J. Aust. Math. Soc., {\bf 86}, (2009) 33-44.
\bibitem{greville1966note}
T.N.E. Greville, {\it Note on the generalized inverse of a matrix product}, SIAM Rev., {\bf 8}, (1966) 518-521.
\bibitem{hartwig1986reverse}
R.E. Hartwig, {\it The reverse order law revisited}, Linear Algebra Appl., {\bf 76}, (1986) 241-246.


\bibitem{lin2011mixedabsorption}
Y. Lin, Y. Gao, {\it Mixed absorption laws of the sum of two matrices on \{1,2\}-inverse and \{1,3\}-inverse}, J. Univ. Shanghai Sci. Technol., {\bf 33}, (2011) 168-173.
\bibitem{liu2012absorption}
X. Liu, H. Jin, D.S. Cvetkovi\'{c}-Ili\'{c}, {\it The absorption laws for the generalized inverses}, Appl. Math. and Computation, {\bf 219}, (2012) 2053-2059.
\bibitem{meenakshi1986principal}
 A.R. Meenakshi, {\it Principal pivot transforms of an EP matrix}, C.R. Math. Rep. Acad. Sci. Canada, {\bf 8}, (1986) 121-126.
\bibitem{wang2011associated}
 H. Wang and X. Liu, {\it The Associated Schur complements of $M=\begin{pmatrix}
A & B\\
C & D
\end{pmatrix}$}, Filomat, {\bf 25}, (2011) 155-161.

\end{thebibliography}
\end{document}